\newlength{\xywd}
\newcommand{\xyrightarrow}[2][]{%
  \sbox{0}{$\scriptstyle#1$}%
  \xywd=\wd0
  \sbox{0}{$\scriptstyle#2$}%
  \ifdim\wd0>\xywd \xywd=\wd0 \fi
  \xymatrix@C\dimexpr\xywd+1em\relax{{}\ar[r]^{#2}_{#1}&{}}%
}
\newtheorem{theorem}{Theorem}
\newtheorem{corollary}{Corollary}[theorem]
\theoremstyle{definition}
\newtheorem{defn}{Definition}[section]
\crefname{section}{§}{§§}
\Crefname{section}{§}{§§}
\newcommand{\geqs}{\geqslant}
\newcommand{\leqs}{\leqslant}
\newcommand{\Nb}{{\mathbb N}}
\newcommand{\Rb}{{\mathbb R}}
\newcommand{\sgn}{\operatorname{sgn}} 
\renewcommand*\env@matrix[1][\arraystretch]{%
  \edef\arraystretch{#1}%
  \hskip -\arraycolsep
  \let\@ifnextchar\new@ifnextchar
  \array{*\c@MaxMatrixCols c}}
\newtheorem{lem}[theorem]{Lemma}
\newtheorem{prop}[theorem]{Proposition}
\theoremstyle{definition}
\theoremstyle{remark}
\begin{document}
\title[Safronov-Dubovski coagulation equation]{Theoretical Analysis of a discrete population balance model for Sum kernel}

\author{Sonali Kaushik} 
\address[S. Kaushik]{ BITS Pilani, Department of Mathematics,
 Pilani Campus, Rajasthan, 333031, India.}  
\email{p20180023@pilani.bits-pilani.ac.in}

\author{Rajesh Kumar} 
\address[R. Kumar]{ BITS Pilani, Department of Mathematics,
 Pilani Campus, Rajasthan, 333031, India.} 
 \email{rajesh.kumar@pilani.bits-pilani.ac.in}

\author{Fernando P. da Costa}
\address[F.P. da Costa]{Univ. Aberta, Dep. of Sciences and Technology,
  Rua da Escola Polit\'ecnica 141-7, P-1269-001 Lisboa, Portugal, and
  Univ. Lisboa, Instituto Superior T\'ecnico, Centre for Mathematical
  Analysis, Geometry and Dynamical Systems, Av. Rovisco Pais 1,
  P-1049-001 Lisboa, Portugal.}  
\email{fcosta@uab.pt}

\thanks{\emph{Corresponding author:} R. Kumar}

\date{\today}

\subjclass{Primary 34D20, 15A18; Secondary 92C50}

\keywords{Discrete population balance model; Safronov-Dubovski coagulation equation; Oort-Hulst-Safronov (OHS) equation; existence of solutions; conservation of mass; differentiability}

\begin{abstract}
The Oort-Hulst-Safronov equation, shorterned as OHS is a relevant population balance model. Its discrete form, developed by Dubovski is the main focus of our analysis. The existence and density  conservation are established for the coagulation rate $V_{i,j} \leqs (i+j),$ $\forall i,j \in \mathbb{N}$. 
Differentiability of the solutions is investigated for the kernel $V_{i,j} \leqs i^{\alpha}+j^{\alpha}$ where $0 \leqs \alpha \leqs 1$. The article finally deals with the uniqueness result that requires the boundedness of the second moment.
\end{abstract}

\maketitle

\section{Introduction}
The \emph{coagulation} is defined as the process when clusters of mass $i$ and $j$ ($i$ and $j$ mers) merge together to generate a $(i+j)$-mer. 
Coagulation processes have a plethora of real world appications, including collision of asteroids 
\cite{piotrowski1953collisions}, red blood cell aggregation \cite{perelson1984kinetics}, helium bubble formation in nuclear materials 
\cite{bonilla2006kinetics}, colloidal chemistry \cite{aizenman1979convergence}, formation of Saturn's rings \cite{brilliantov2015size}, 
among many others. 

\medskip

This paper discusses a discrete model, i.e., a model for which the properties of the particles namely size, are described by a discrete variable $i\in \Nb$, 
known as the Safronov-Dubovski (S-D) coagulation equation \cite{davidson2014existence}. The equation seems to have been first proposed by Dubovski \cite{dubovski1999atriangle} in 1999, 
long after the introduction of the continuous version, called the Oort-Hulst-Safronov (OHS) equation 
\cite{oort1946gas, safronov1972evolution},  in the context of coagulation of particles in the 
celestial phenomena. 

\medskip

The S-D model is defined for $t \in [0,\infty)$ as
\begin{equation}\label{sd}
\frac{d\psi_i(t)}{dt}=\delta_{i\geqs 2}\psi_{i-1}(t) \sum_{j=1}^{i-1} jV_{i-1,j} \psi_j(t) -\psi_i(t) \sum_{j=1}^{i}jV_{i,j} \psi_j(t) -\sum_{j=i}^{\infty} V_{i,j}\psi_i(t)\psi_j(t),\quad i \in \mathbb{N},
\end{equation} 
where $\delta_P=1$ if $P$ is true, and zero otherwise.

\medskip

The Eqn.\eqref{sd} deals with the change in concentration of $i$-clusters, and, consequently, the focus of our study is the qualitative behavior of $\psi_i(t)$ to the initial value problems defined by this system with initial conditions 
\begin{equation}\label{sdic}
\psi_i(0)=\psi_{0 i}\geqs 0.
\end{equation}

\medskip

From a physical point of view, the equations in system \eqref{sd} are the rate equations describing the time-dependent behaivour of a system of particles whose
dynamics can be described as follows: (a) particles of size $i$ are produced when a $i-1$ cluster is struck by a particle of size 
$j\leqs i-1$: the result of this collision is that the smaller $j$-cluster is pulverized into $j$ particles of size $1$, each of which attaches itself 
to different particles of size  $i-1$ to form particles of size $i$; this is described mathematically by the first expression in the right part of \eqref{sd}:
\begin{equation*}
\delta_{i\geqs 2}\psi_{i-1}(t) \sum_{j=1}^{i-1} jV_{i-1,j} \psi_j(t);
\end{equation*}
(b) particles of size $i$ are destroyed either by being impacted by smaller clusters and thus growing to clusters of size $i+1$, by the 
mechanism just described, resulting in the second term in the right hand side of \eqref{sd}
\begin{equation*}
- \psi_i(t) \sum_{j=1}^{i}jV_{i,j} \psi_j(t),
\end{equation*}
or by being themselves the smaller clusters in a collision 
with a larger $j$-cluster, in which case it is the  $i$-cluster who is pulverized into a number $i$ of $1$- clusters who will then attach to $j$-clusters 
to produce $(j+1)$-clusters, which results in the last term in the right hand side of \eqref{sd}:
\begin{equation*}
- \sum_{j=i}^{\infty} V_{i,j}\psi_i(t)\psi_j(t).
\end{equation*}
The parameters $V_{i,j}$ for $i\neq j$, called the coagulation kernel, are the rate constants for the reaction
between clusters of sizes $i$ and $j$, and are assumed to be time independent, non-negative, and symmetric, i.e, 
$V_{i,j}=V_{j,i}$. As discussed in \cite{dubovski1999atriangle}, the rate $V_{i,i}$ is equal to half of the collisions' rate 
for the particles of size $i$.

\medskip

There are various physical properties of the solution that can be investigated with the help of its moments,
which is also an extremely useful tool to handle the related mathematical problems. The $r^{\rm th}$ moment of a solution $\psi=(\psi_i)$ of \eqref{sd} is defined by 

\begin{equation}\label{rthmoment}
\mu_r(\psi(\cdot)) = \mu_r(\cdot) := \sum_{i=1}^{\infty} i^r \psi_i(\cdot).
\end{equation}

Putting $r=0$ gives the zeroth moment, denoted as $\mu_0(\cdot)$, which is the total number of particles per unit volume. Taking $r=1$ in (\ref{rthmoment}) we get the first moment, $\mu_1(\cdot)$, which can be physically interpreted as (proportional to) the mass of the system per unit volume. We expect the mass to be a conserved quantity, i.e.\,
$\mu_1(t)=\mu_1(0)$, for kernels with slowly increasing rate of coagulation. Though the physical relevance of the second 
moment has not been much discussed  in the literature, it can be interpreted as the energy dissipated in the process \cite{dubovskiui1994mathematical}. 
Bagland \cite{bagland2005convergence} established that the solution for S-D model when 
$$\lim_{j \to \infty}\frac{V_{i,j}}{j}=0, \hspace{.2 cm} i,j \geqs 1,\hspace{.2 cm}\psi_{0} \in L_1$$ exists for $t \in [0,\infty)$.
Davidson \cite{davidson2014existence} in 2014, presented
a global existence theorem, mass conservation result and uniqueness theorem for three types of kernels, namely:
$$jV_{i,j} \leqs M\; \text{for}\; j \leqs i; \quad V_{i,j} \leqs C_V h_ih_j,\; \text{with}\; \frac{h_i}{i} \to 0 \hspace{.2 cm} \text{as} \hspace{.2 cm} i \to \infty;  \quad\text{and}\quad V_{i,j} \leqs C_V \hspace{.1 cm} \forall \hspace{.1 cm} i,j, C_V \in \Rb^{+}.$$
Mass is proven to be conserved for $V_{i,j} \leq C_Vi^{1/2}j^{1/2}$ and the solution is shown to be unique in the third case, i.e., the bounded kernel is considered.   
In general, for large classes of kernels such as product kernel, mass is not a conserved quantity, see \cite{dubovski1999atriangle} for the continuous OHS equation. This phenomenon is a consequence of part of the mass of the cluster distribution $(\psi_i)$ being transported into larger and larger values of $i$,
and part of it being lost to the limit $i\to\infty$, physically interpreted as an infinite size cluster, or gel,  in a process called \emph{gelation}. One can find results on gelation for coagulation type models in, for instance, \cite{da2015mathematical}, \cite[Chapter 9]{banasiak2019analytic}. 

\medskip

In this paper, we study the existence of mass conserving solutions to the initial value problem 
\eqref{sd}-\eqref{sdic} with rate kernels satisfying $V_{i,j} \leqs (i+j),$ $\forall  i,j \in \mathbb{N}$,
and initial condition with finite mass.
To establish regularity of the solutions, we need to consider a balance between a more restrictive class of kernels 
$V_{i,j} \leqs i^\alpha + j^\alpha$, for $\alpha\in [0, 1]$ and all $i, j\in\Nb$, and initial condition with some
finite higher moment. The uniqueness result is also established for a restrictive classes of kernels, i.e., $V_{i,j} \leq C_V (i+j)$ and $V_{i,j} \leq C_V \,\min\{i^{\eta},j^{\eta}\}, 0 \leq \eta \leq 2$, $\forall i,j \in \Nb$, $C_V \in \mathbb{R}^{+}$. The boundedness of a higher moment in finite time played a significant role in proving uniqueness. Let us now define some basic notation and notions that are needed throughout.

\medskip

The set of finite mass sequences is defined by 
\begin{equation}\label{spaceX}
X=\{z=(z_k): ||z|| <\infty\},
\end{equation}
with 
\begin{equation}\label{norm1}
||z||:=\sum_{k=1}^{\infty} k|z_k|,
\end{equation}
where $(X, \|\cdot\|)$ is a Banach space. For analysis, we often consider the non-negative cone
\begin{equation}\label{coneX+}
X^{+}= \{\psi=(\psi_i) \in X: \psi_i \geqs 0\}.
\end{equation}


\begin{defn} \label{defsol}
The solution $\psi=(\psi_i)$ of the initial value problem \eqref{sd}--\eqref{sdic} on $[0,T)$ where $0<T <\infty$, is a function 
$\psi:[0,T) \to X^{+}$ with following properties 
\begin{enumerate}
\item[(a)] $\forall \hspace{.1cm} i, \hspace{.1cm} \psi_i$ is continuous.
\item[(b)] $\int_{0}^{t} \sum_{j=1}^{\infty} V_{i,j}\psi_j(s)ds <\infty$ for every $i$ and $\forall \hspace{.2cm} 0 \leq t < T$
\item[(c)] $\forall \hspace{.1cm} i$ and $\forall \hspace{.1cm} 0 \leq t <T$
\begin{equation}\label{solution}
\psi_i(t)=\psi_{0 i}+\int_{0}^{t}\Bigl(\delta_{i\geqs 2} \psi_{i-1}\sum_{j=1}^{i-1}jV_{i-1,j}\psi_j-\psi_i\sum_{j=1}^{i} jV_{i,j}\psi_j-
\psi_i\sum_{j=i}^{\infty}V_{i,j}\psi_j\Bigr)(s)ds,
\end{equation}
\end{enumerate}
where $\delta_P=1$ if $P$ is true, and is zero otherwise. 
\end{defn}

The article is organized in six
sections. The second section discusses the preliminary results required to establish the main results of the work.
Section \ref{existence} deals with the existence of solution and its corollary.
Further, in Section \ref{density conservation}, density conservation is shown for all the solutions of the given equation and the regularity result is proved in Section \ref{regularity}. Finally, the statement and  proof of the uniqueness theorem are part of Section 6. 

\section{A finite dimensional truncation}
Our general approach in this paper 
consists in considering a finite $n$-dimensional truncation
of \eqref{sd} and, after obtaining appropriate \emph{a priori} estimates for its solutions, passing to the limit $n\to\infty$
and getting corresponding results for \eqref{sd}.

\medskip

In this section we introduce a truncated system of the S-D model and study 
some useful results about the moments of its solutions.
The finite $n$-dimensional truncated system for the equation (\ref{sd}) that we shall consider
corresponds to assuming that no
particles with size larger than $n$ can exist initially or be formed by the dynamics. Thus, for the phase variable $\psi=(\psi_1, \psi_2, \ldots, \psi_n)$ the system is
\begin{equation}\label{truncation}
\frac{d\psi_i}{dt}= \Psi^{n}_i(\psi), 
\qquad \text{for $1 \leqslant i \leqslant n$,}
\end{equation}
where
\begin{align}
\Psi^{n}_1(\psi) &:= - V_{1,1}\psi_1^2-\psi_1\sum_{j=1}^{n-1}V_{1,j}\psi_j  \label{1aa} \\
\Psi^{n}_i(\psi) &:=\psi_{i-1}\sum_{j=1}^{i-1}jV_{i-1,j}\psi_j-\psi_i\sum_{j=1}^{i} jV_{i,j}\psi_j-\psi_i\sum_{j=i}^{n-1}V_{i,j}\psi_j , 
\quad \text{for $2 \leqslant i \leqslant n-1$,} \label{1a}  \\
\Psi^{n}_n(\psi) &:=\psi_{n-1}\sum_{j=1}^{n-1}jV_{n-1,j}\psi_j.  \label{1b}
\end{align}

From what was stated above the initial conditions of interest are
\begin{equation}\label{tic}
\psi_i(0)  = \psi_i^0 \geqs 0, \qquad\text{for $1\leqslant i\leqslant n$}.
\end{equation}
It can be observed here that we have truncated the last sum upto $n-1$ not $n$. This was done to make sure that the truncation conserves mass which will be beneficial in proving the existence result. The solutions to \eqref{truncation}--\eqref{tic} exists and are unique which can be proved using the fact that the right side contains polynomials and the Picard-Lindel\"of theorem. The solutions are also non-negative, established by the addition of positive $\varepsilon$ to the right-part of all equations. Now, if $\psi^\varepsilon$ satisies $\psi_i^\varepsilon(t_0)> 0$ for some $t_0 \in \mathbb{R}^{+}-\{0\}$ and 
$\psi_j^\varepsilon(t_0)=0$ for every $j\in\{ 1,\ldots, n\}$, 
then $\frac{d}{dt}\psi_j^\varepsilon(t_0)>0$. Finally, taking $\varepsilon \to 0$ gives the non-negativity (see \cite[Theorem III-4-5]{hsieh2012basic}).

\medskip

As is usual in the analysis of coagulation type systems, estimates about the time 
evolution of moments of solutions are of paramount 
importance. For the truncated system, and in a way analogous to the $r^{\text{th}}$ moments
defined in \eqref{rthmoment}, we consider the quantities
\begin{equation}
\mu^n_g(t) := \sum_{i=1}^ng_i\psi_i(t), \label{tmoment}
\end{equation}
where $g=(g_i)$ is a non-negative sequence.  The following result on the evolution of $\mu_g^n$ will be relevant:

\begin{lem}\label{lemma0}
Let $\psi=(\psi_i)_{i\in\{1, \ldots, n\}}$ be a solution of   \eqref{truncation}--\eqref{tic} 
defined in an open interval $I$ containing $0$. Let $g=(g_i)$ be a real sequence. Then
\begin{equation}
\frac{d\mu_g^n}{dt} = \sum_{i=1}^{n-1}\sum_{j=i}^{n-1}(ig_{j+1}-ig_j-g_i)V_{i,j}\psi_i\psi_j \label{momenteq}
\end{equation}
\end{lem}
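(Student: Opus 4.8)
The plan is to differentiate $\mu_g^n$ term by term and substitute the right-hand sides \eqref{1aa}--\eqref{1b}, then reorganise the resulting collection of sums into the single symmetric double sum of \eqref{momenteq}. Writing $\frac{d\mu_g^n}{dt}=\sum_{i=1}^n g_i\Psi_i^n(\psi)$, I would first sort the contributions into one \emph{gain} block coming from the production terms $\psi_{i-1}\sum_{j=1}^{i-1}jV_{i-1,j}\psi_j$ and two \emph{loss} blocks coming from $-\psi_i\sum_{j=1}^{i}jV_{i,j}\psi_j$ and $-\psi_i\sum_{j=i}^{n-1}V_{i,j}\psi_j$. Care is needed at the endpoints: by \eqref{1aa} the index $i=1$ contributes no gain term, while by \eqref{1b} the index $i=n$ contributes only a gain term and neither loss term, so the gain block runs over $2\le i\le n$ and each loss block over $1\le i\le n-1$.

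Next I would align the gain block with the first loss block. Shifting $i\mapsto i-1$ in the gain sum turns $\sum_{i=2}^n g_i\psi_{i-1}\sum_{j=1}^{i-1}jV_{i-1,j}\psi_j$ into $\sum_{i=1}^{n-1}g_{i+1}\psi_i\sum_{j=1}^{i}jV_{i,j}\psi_j$, which has exactly the same double-index structure as the first loss block $-\sum_{i=1}^{n-1}g_i\psi_i\sum_{j=1}^{i}jV_{i,j}\psi_j$. Adding them produces the discrete difference $g_{i+1}-g_i$ as a common factor over the lower-triangular region $\{1\le j\le i\le n-1\}$. The crux of the argument is then to flip this region into the upper-triangular region $\{i\le j\le n-1\}$ appearing in the target: relabelling $i\leftrightarrow j$ and invoking the symmetry $V_{i,j}=V_{j,i}$ converts $\sum_{i=1}^{n-1}\sum_{j=1}^{i}(g_{i+1}-g_i)jV_{i,j}\psi_i\psi_j$ into $\sum_{i=1}^{n-1}\sum_{j=i}^{n-1}(g_{j+1}-g_j)iV_{i,j}\psi_i\psi_j=\sum_{i=1}^{n-1}\sum_{j=i}^{n-1}(ig_{j+1}-ig_j)V_{i,j}\psi_i\psi_j$.

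Finally the second loss block $-\sum_{i=1}^{n-1}\psi_i\sum_{j=i}^{n-1}g_iV_{i,j}\psi_j$ is already supported on $\{i\le j\le n-1\}$, so adding it contributes the remaining $-g_i$ and yields $\sum_{i=1}^{n-1}\sum_{j=i}^{n-1}(ig_{j+1}-ig_j-g_i)V_{i,j}\psi_i\psi_j$, which is precisely \eqref{momenteq}. I expect the only real obstacle to be the bookkeeping: keeping the endpoint indices $i=1$ and $i=n$ straight, and, above all, carrying out the symmetry-based reindexing that trades the triangular region $j\le i$ for $j\ge i$ without dropping or double-counting the diagonal $j=i$. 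This manipulation is a discrete analogue of integration by parts, the appearance of the difference $g_{j+1}-g_j$ reflecting that a cluster of size $j+1$ is created at the expense of one of size $j$.
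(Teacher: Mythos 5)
Your proposal is correct and follows essentially the same route as the paper's own proof: both differentiate $\mu_g^n$ term by term, pair the index-shifted gain block with the first loss block to produce the difference $g_{i+1}-g_i$ over the lower-triangular region, and then use the symmetry $V_{i,j}=V_{j,i}$ to relabel $i\leftrightarrow j$ and combine with the second loss block, which supplies the $-g_i$ term. Your explicit bookkeeping of the endpoint indices $i=1$ and $i=n$ is precisely what the paper's ``collecting together'' of terms does implicitly, so there is nothing to add.
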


\begin{proof}
By \eqref{1aa}--\eqref{1b} we can write
\begin{align*} 
\frac{d\mu_g^n}{dt} &= \sum_{i=1}^{n} g_i \Psi^{n}_i(\psi) \\ 
                                    &=  \Bigl(- g_1V_{1,1}\psi_1^2-g_1\psi_1\sum_{j=1}^{n-1}V_{1,j}\psi_j\Bigr) \, + \\
                                    & \;\;\;\; +  \sum_{i=2}^{n-1} g_i\Bigl(\psi_{i-1}\sum_{j=1}^{i-1}jV_{i-1,j}\psi_j-\psi_i\sum_{j=1}^{i} jV_{i,j}\psi_j-\psi_i\sum_{j=i}^{n-1}V_{i,j}\psi_j\Bigr)\, +  \\
                                    & \;\;\;\; + g_n\psi_{n-1}\sum_{j=1}^{n-1}jV_{n-1,j}\psi_j.
\end{align*}
Rewriting the right-hand side by collecting together the first and forth terms, the second and fifth terms, and the third and sixth
we obtain
\begin{align*} 
\frac{d\mu_g^n}{dt} &=  \sum_{i=1}^{n-1}\sum_{j=1}^{i}(g_{i+1}-g_i)jV_{i,j}\psi_i\psi_j - 
 \sum_{i=1}^{n-1}\sum_{j=i}^{n-1}g_{i}V_{i,j}\psi_i\psi_j,
\end{align*}
and now altering the order of variables in the first equation and using the symmetry of the 
rate coefficients, $V_{j,i}=V_{i,j},$
we finally conclude \eqref{momenteq}. \end{proof}

\medskip

For $g_p := (i^p)$ consider the simplified notation $\mu_p^n:=\mu^n_{g_p}.$ It is clear from \eqref{momenteq} that,
for all $t\in I$,
\begin{equation}\label{zeroth}
\frac{d\mu^{n}_0(t)}{dt} \leqs 0
\end{equation}
and thus $\mu^{n}_0(t) \leqs \mu^{n}_0(0),$ for all $t\in I\cap\{t\geqs 0\}.$ This \emph{a priori} bound
implies that non negative solutions of the truncated systems \eqref{truncation}--\eqref{1b} are globally defined
forward in time, i.e., $I\supset [0, +\infty).$
Taking $g=g_1$ in \eqref{momenteq} we immediately conclude that, for all $t$, 
\begin{equation}\label{truncatedmassconservation}
\frac{d\mu_1^{n}(t)}{dt}=0,
\end{equation}
which means that solutions to the truncated system conserve mass. For further reference, this is stated in the next lemma.

\begin{lem}\label{lemma1}
Solutions to Cauchy problems for the truncated systems \eqref{truncation}--\eqref{1b} are 
globally defined forward in time and mass conserving, i.e., satisfy
\begin{equation}\label{2}
\mu^{n}_1(t)=\mu^{n}_1(0), \quad\forall t\geqs 0.
\end{equation}
\end{lem}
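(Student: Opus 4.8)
The plan is to obtain both conclusions as immediate consequences of the moment identity \eqref{momenteq} in Lemma \ref{lemma0}, specialised to two particular weight sequences. Local existence and uniqueness of solutions to \eqref{truncation}--\eqref{tic} having already been granted by Picard--Lindel\"of, the only substantive tasks remaining are to rule out finite-time blow-up (for global existence) and to exhibit an exact cancellation in the summand of \eqref{momenteq} (for mass conservation).

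First I would take $g_i\equiv 1$ in \eqref{momenteq}. The bracketed coefficient then collapses to $ig_{j+1}-ig_j-g_i = i-i-1 = -1$, giving
\begin{equation*}
\frac{d\mu^n_0}{dt} = -\sum_{i=1}^{n-1}\sum_{j=i}^{n-1}V_{i,j}\psi_i\psi_j \leqs 0,
\end{equation*}
the sign being forced by the non-negativity of $V_{i,j}$ and of each component $\psi_i$ (already established for the truncated system). Integrating over $[0,t]$ yields the \emph{a priori} bound $\mu^n_0(t)\leqs\mu^n_0(0)$ throughout the interval of existence.

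Next I would leverage this bound to continue the solution to all of $[0,+\infty)$. Since every $\psi_i\geqs 0$ and their sum $\mu^n_0(t)$ is bounded above by the constant $\mu^n_0(0)$, each coordinate is trapped in the compact box $[0,\mu^n_0(0)]$; as the right-hand side of \eqref{truncation} is polynomial and therefore bounded on this box, the standard continuation theorem for ODEs forbids the solution from leaving in finite time, so $I\supset[0,+\infty)$.

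Finally, for mass conservation I would put $g_i=i$ in \eqref{momenteq}. The coefficient now becomes $ig_{j+1}-ig_j-g_i = i(j+1)-ij-i = 0$ identically, so $\frac{d\mu^n_1}{dt}=0$ and hence $\mu^n_1(t)=\mu^n_1(0)$ for all $t\geqs 0$. I anticipate no real difficulty: the whole argument hinges on the two elementary identities in the summand of \eqref{momenteq}. The only step deserving care is the continuation argument, where non-negativity is exactly what upgrades the one-sided control of $\mu^n_0$ into genuine confinement of every individual coordinate.
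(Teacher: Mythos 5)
Your proposal is correct and follows essentially the same route as the paper, which likewise specialises \eqref{momenteq} to $g_i\equiv 1$ to get \eqref{zeroth} and the \emph{a priori} bound $\mu_0^n(t)\leqs\mu_0^n(0)$ (hence global forward existence), and then to $g_i=i$ to obtain the identical cancellation $i(j+1)-ij-i=0$ yielding \eqref{truncatedmassconservation}. The only difference is that you spell out the confinement-plus-continuation step that the paper leaves implicit, which is a harmless (indeed welcome) elaboration.
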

For the existence proof let us consider $\nu^{n}_m(t)$ defined as in \cite{ball1990discrete} by 
\begin{equation}\label{xnmt}
\nu^{n}_m(t):= \sum_{i=m}^{n}i\psi^{n}_i(t),
\end{equation}
where $\psi^n=(\psi_1^n,\ldots, \psi_n^n)$ is a solution of the $n$-dimensional truncated system \eqref{truncation}--\eqref{1b}.
From these expressions we immediately obtain
\begin{equation}\label{dxnmt}
\frac{d\nu^{n}_m(t)}{dt}=\Big(
\sum_{i=m}^{n-1}\sum_{j=1}^{i}jV_{i,j}\psi^{n}_i \psi^{n}_j + m\psi^{n}_{m-1}\sum_{j=1}^{m-1}jV_{m-1,j}\psi^{n}_j  - 
\sum_{i=m}^{n-1}\sum_{j=i}^{n-1}iV_{i,j}\psi^{n}_i\psi^{n}_j\Big)(t).
\end{equation}
Assume now $2m<n$ and consider the function $\kappa^{n}_m(\cdot)$ defined by 
\begin{equation}\label{qnmt}
\kappa^{n}_m(t) :=\sum_{i=m}^{2m} i\psi^{n}_i+2m\sum_{i=2m+1}^{n}\psi^{n}_i,
\end{equation}
where, again, $\psi^n=(\psi_1^n,\ldots, \psi_n^n)$ is a solution of the $n$-dimensional truncated system. Then, after
a few algebraic manipulations, we get
\begin{align}\label{qnmtex}
\nonumber
\frac{d\kappa^{n}_m(t)}{dt} &= \sum_{i=m}^{2m}i\psi_i^n + 2m\sum_{i=2m+1}^n\psi_i^n \nonumber \\
&= m\psi_{m-1}^{n}(t)\sum_{j=1}^{m-1}jV_{m-1,j}\psi_j^{n}(t)+\sum_{i=m}^{2m-1}\psi^{n}_i(t)\sum_{j=1}^{i}jV_{i,j}\psi^{n}_j(t) \;-
\nonumber  \\
&\;\;\;\;\;-\sum_{i=m}^{2m}\sum_{j=i}^{n-1}i \psi^{n}_i(t)V_{i,j}\psi^{n}_j(t)-2m \sum_{i=2m+1}^{n-1}\sum_{j=i}^{n-1} V_{i,j}\psi^{n}_i(t)\psi^{n}_j(t).
\end{align}
Finally, so that we can take $n\to \infty$ (or, eventually, only on a subsequence $n_k\to\infty$), we make use of the 
following lemma.

%
\begin{lem}\label{lemma2_1} 
Take $\psi_0=(\psi_{0 i})\in X^+$ and,
for each $n\in\Nb,$ consider the point $\psi_{0}^n\in X^+$ defined 
by $\psi_{0}^n = (\psi_{01}, \psi_{02}, \ldots, \psi_{0n}, 0, 0, \ldots)$ and let it 
be identified with the point of $\Rb^n$ obtained by discarding the $j^\text{th}$ components, for $j>n.$
Let $\psi^n$ be the solution of the $n$-dimensional truncated system \eqref{truncation}--\eqref{1b} when $V_{i,j} \leq (i+j)$
with initial condition $\psi^n(0)=\psi^n_0$ such that (\ref{2}) holds, then $\psi^n$ is relatively compact in $C([0,T])$.
\end{lem}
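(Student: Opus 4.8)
The plan is to apply the Arzel\`a--Ascoli theorem componentwise together with a Cantor diagonal extraction, feeding it the a priori estimates supplied by mass conservation (Lemma~\ref{lemma1}). First I would record a uniform pointwise bound on the components. Since $\psi^n(t)\in X^+$ and, by \eqref{2}, $\mu_1^n(t)=\mu_1^n(0)=\sum_{i=1}^n i\psi_{0i}\leqs\|\psi_0\|$ for all $t\geqs 0$, nonnegativity of the $\psi_i^n$ gives $i\psi_i^n(t)\leqs\mu_1^n(t)\leqs\|\psi_0\|$, so that $\psi_i^n(t)\leqs\|\psi_0\|/i\leqs\|\psi_0\|$ for every $i$, every $n\geqs i$, and every $t\in[0,T]$. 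Hence each sequence $(\psi_i^n)_n$ is uniformly bounded on $[0,T]$.

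The second step is equicontinuity. Writing $\psi_i^n(t_2)-\psi_i^n(t_1)=\int_{t_1}^{t_2}\Psi_i^n(\psi^n(s))\,ds$, it is enough to bound $|\Psi_i^n(\psi^n)|$ uniformly in $i,n,t$. Inserting $V_{i,j}\leqs i+j$ into \eqref{1a} and splitting each sum according to whether $j\leqs i$ or $j\geqs i$, one may replace $i+j$ by $2i$ or by $2j$ respectively; combined with $\sum_j j\psi_j^n\leqs\|\psi_0\|$ and the pointwise bound $\psi_i^n\leqs\|\psi_0\|/i$, each of the three terms is bounded by a constant multiple of $\|\psi_0\|^2$, the factors of $i$ produced by the kernel estimate being absorbed by $\psi_i^n\leqs\|\psi_0\|/i$. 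The endpoint equations \eqref{1aa} and \eqref{1b} are handled in exactly the same way. This yields $|\Psi_i^n(\psi^n(s))|\leqs C(\|\psi_0\|)$ uniformly, so each family $(\psi_i^n)_n$ is uniformly Lipschitz, hence equicontinuous, on $[0,T]$.

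With uniform boundedness and equicontinuity in hand, Arzel\`a--Ascoli produces, for each fixed $i$, a subsequence along which $\psi_i^n$ converges in $C([0,T])$; a Cantor diagonal argument then yields a single subsequence $(n_k)$ and a limit $\psi=(\psi_i)$ with $\psi_i^{n_k}\to\psi_i$ in $C([0,T])$ for every $i$, which gives the asserted relative compactness.

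The main obstacle is upgrading this componentwise uniform convergence to convergence in the mass norm of $X$, that is, to genuine relative compactness of $(\psi^n)$ in $C([0,T];X)$. For this one needs a uniform equi-integrability (uniform tail) estimate, $\sup_n\sup_{t\in[0,T]}\sum_{i\geqs m}i\psi_i^n(t)\to 0$ as $m\to\infty$, which is exactly what the auxiliary quantities $\nu_m^n$ of \eqref{xnmt}--\eqref{dxnmt} and the capped tail $\kappa_m^n$ of \eqref{qnmt}--\eqref{qnmtex} are designed to provide. The difficulty is that the naive bound on $d\nu_m^n/dt$ involves the second moment and degenerates as $m\to\infty$; the capping at weight $2m$ in $\kappa_m^n$ removes this growth, and one must exploit the sign structure of the flux terms in \eqref{qnmtex} to obtain a Gronwall-type differential inequality whose right-hand side is controlled by the tail mass at level $m$. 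Since $\psi_0\in X$ forces $\sum_{i\geqs m}i\psi_{0i}\to 0$, integrating over $[0,T]$ delivers the required uniform smallness of the tails; combining it with the componentwise uniform convergence of the finite head $\sum_{i<m}i|\psi_i^{n_k}-\psi_i|$ then gives convergence in $C([0,T];X)$.
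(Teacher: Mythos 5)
Your proposal is correct and takes essentially the same route as the paper: the paper's (very terse) proof likewise combines mass conservation with $V_{i,j}\leqs i+j$ to get the uniform estimate $\sup_{t\geqs 0}\bigl(\psi_i^n(t)+\bigl|\tfrac{d}{dt}\psi_i^n(t)\bigr|\bigr)\leqs C\mu_1(0)^2$ and then invokes the Arzel\`a--Ascoli theorem, which is exactly your boundedness-plus-equicontinuity argument (with the diagonal extraction left implicit). Your final paragraph on upgrading componentwise convergence to convergence in the norm of $X$ via $\nu_m^n$ and $\kappa_m^n$ is not part of this lemma's claim --- that tail analysis belongs to the proof of Theorem~\ref{existencetheorem} and its Corollary --- but it correctly anticipates how the lemma is used there.
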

\begin{proof}
	Using the truncated system (\ref{1aa}-\ref{1b}) and mass conservation of this system (see (\ref{2})), it can shown that $\exists$ a constant $C >0$ such that $\forall$ $n \geq i \geq 1$,
	$$\sup_{t \geq 0}\Big(\psi^{n}_i(t)+\Big|\frac{d\psi^{n}_i(t)}{dt}\Big|\Big) \leq C \mu_1(0)^2.$$
	Thus, Ascoli theorem gives the intended result.
\end{proof}

Now, we have gathered all the required information to proceed with the existence results. 

\section{Existence result for the Cauchy problem}\label{existence}
We can now prove the first main result of the paper: the existence of global solutions of the Cauchy 
problem \eqref{sd}--\eqref{sdic}.

\begin{theorem}\label{existencetheorem}
Let, $V_{i,j}$ be nonnegative, symmetric for the exchange of $i$ with $j$, and 
satisfy $V_{i,j} \leqs (i+j),$ $\forall$ $i,j,$  and let $\psi_0=(\psi_{0 i}) \in X^{+}$, $\mu_1(0) <\infty$.
Then, $\exists$ a non-negative solution of \eqref{sd}--\eqref{sdic} defined globally. 
\end{theorem}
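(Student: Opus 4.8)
The plan is to obtain a global solution as a limit of solutions of the finite-dimensional truncations of Section 2, following the scheme of Ball and Carr \cite{ball1990discrete}. By Lemma \ref{lemma1}, for every $n$ the truncated solution $\psi^n$ is defined for all $t\geqs 0$, is non-negative, and conserves mass, so $\|\psi^n(t)\|=\mu_1^n(0)\leqs\mu_1(0)$. By Lemma \ref{lemma2_1}, for each fixed $i$ the family $\{\psi_i^n\}_n$ is bounded and equicontinuous on $[0,T]$, hence relatively compact in $C([0,T])$; a Cantor diagonal extraction produces a subsequence (not relabelled) and a limit $\psi=(\psi_i)$ with $\psi_i^n\to\psi_i$ uniformly on $[0,T]$ for every $i$. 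The limit is non-negative and continuous in each component, and Fatou's lemma together with the uniform mass bound gives $\|\psi(t)\|\leqs\mu_1(0)$, so $\psi(t)\in X^+$ and property~(a) of Definition \ref{defsol} holds; property~(b) is immediate from $V_{i,j}\leqs i+j$ and $\mu_1(0)<\infty$, since $\sum_j V_{i,j}\psi_j\leqs i\mu_0+\mu_1\leqs(i+1)\mu_1$.

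Next I would integrate the truncated equations \eqref{truncation} and pass to the limit in
\[
\psi_i^n(t)=\psi_{0i}+\int_0^t\Psi_i^n(\psi^n)(s)\,ds,
\]
matching the limit against \eqref{solution}. Fix $i$ and let $n$ be large. The gain term $\psi_{i-1}^n\sum_{j=1}^{i-1}jV_{i-1,j}\psi_j^n$ and the first loss term $\psi_i^n\sum_{j=1}^{i}jV_{i,j}\psi_j^n$ involve only finitely many components, each converging uniformly on $[0,T]$, and the coefficients $jV_{i,j}$ are bounded on these ranges; hence these two terms converge pointwise in $t$ and, being uniformly bounded, their time integrals converge by dominated convergence to the matching expressions in \eqref{solution}.

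The essential difficulty lies in the last, genuinely infinite, loss term: one must show that the truncated sum $\psi_i^n\sum_{j=i}^{n-1}V_{i,j}\psi_j^n$ converges to $\psi_i\sum_{j=i}^{\infty}V_{i,j}\psi_j$. I would split the sum at a cut-off $N\geqs i$. The head $\sum_{j=i}^{N}V_{i,j}\psi_j^n$ converges uniformly as before, so the problem reduces to a uniform estimate on the tail. Here the hypothesis $V_{i,j}\leqs i+j\leqs 2j$ for $j\geqs i$ is decisive: it gives $\sum_{j>N}V_{i,j}\psi_j^n\leqs 2\sum_{j>N}j\psi_j^n=2\nu_{N+1}^n(t)$, and the whole matter is reduced to showing that the tail mass is equi-integrable, i.e. $\sup_n\int_0^T\nu_{N+1}^n(s)\,ds\to0$ as $N\to\infty$ (equivalently, $\sup_n\sup_{t\in[0,T]}\nu_m^n(t)\to0$ as $m\to\infty$).

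To secure this equi-integrability I would exploit the capped tail moments $\nu_m^n$ and $\kappa_m^n$ introduced in Section 2 and their evolution identities \eqref{dxnmt} and \eqref{qnmtex}. Since $\psi_0\in X$, the initial tail $\nu_m^n(0)\leqs\sum_{i\geqs m}i\psi_{0i}$ is small uniformly in $n$ for large $m$; bounding the right-hand sides of \eqref{dxnmt}--\eqref{qnmtex} by means of $V_{i,j}\leqs i+j$ and the conserved mass, and using that the weights in $\kappa_m^n$ are capped at $2m$ precisely to tame the incoming flux at the boundary, should yield a Gronwall-type inequality on $[0,T]$ that propagates the smallness of the initial tail to a bound uniform in $n$ and $t$. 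I expect this flux estimate to be the main obstacle, since the sum kernel transports mass towards large clusters and one must verify that the capped weighting keeps the boundary term $m\psi_{m-1}^n\sum_{j=1}^{m-1}jV_{m-1,j}\psi_j^n$ under control. Granting the uniform tail bound, Vitali's theorem lets the infinite loss term pass to the limit, so $\psi$ satisfies \eqref{solution} for all $i$ and all $t\in[0,T)$; as $T$ is arbitrary the solution is global, and checking (a)--(c) of Definition \ref{defsol} finishes the proof.
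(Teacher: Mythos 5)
Your architecture is the paper's own: truncate, use Lemma~\ref{lemma1} and Lemma~\ref{lemma2_1} to extract a convergent subsequence, reduce the limit passage in the infinite loss term to the uniform tail estimate $\sup_n\int_0^T\nu^n_m(t)\,dt\to 0$ as $m\to\infty$, obtain that estimate from the pair $\nu^n_m$, $\kappa^n_m$ of \eqref{xnmt} and \eqref{qnmt} plus Gronwall, and finish by splitting the loss term at a cut-off and passing to the limit by dominated convergence. All of this matches the paper step for step; indeed your direct verification of Definition~\ref{defsol}(b) via $V_{i,j}\leqs (i+1)j$ and $\|\psi(t)\|\leqs\|\psi_0\|$ is simpler than the paper's, which derives (b) from the tail estimate \eqref{10}.

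However, the one step you leave as an expectation is the crux, and the mechanism you state for it --- ``a Gronwall-type inequality that propagates the smallness of the initial tail'' --- fails as stated. Combining \eqref{dxnmt} with \eqref{qnmtex} yields the identity \eqref{J}, in which the dangerous boundary-flux term $m\psi^n_{m-1}\sum_{j=1}^{m-1}jV_{m-1,j}\psi^n_j$ has indeed been cancelled (this is the purpose of $\kappa^n_m$), but at the price of the term $\kappa^n_m(t)$, evaluated at the \emph{current} time, appearing with a plus sign. Since $\kappa^n_m(t)\leqs \nu^n_m(t)$, and the two quantities are of the same order whenever the tail mass of $\psi^n(t)$ sits at sizes between $m$ and $2m$, the resulting inequality $\nu^n_m(t)\leqs \nu^n_m(0)+\kappa^n_m(t)+6\|\psi_0\|\int_0^t\nu^n_m(s)\,ds$ cannot be closed by Gronwall using only the smallness of $\nu^n_m(0)$: nothing propagated forward from $t=0$ controls $\kappa^n_m(t)$. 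The paper's extra ingredient here is compactness, not propagation: having already extracted the convergent subsequence, it uses \eqref{normbound} and pointwise convergence to show $\sum_i|\psi^n_i(t)-\psi_i(t)|<\varepsilon$ for large $n$, whence $\kappa^n_m(t)\to\kappa_m(t)\leqs\sum_{i\geqs m}i\psi_i(t)$ as in \eqref{6}, and this is small for large $m$ because the \emph{limit} $\psi(t)$ lies in $X^+$ (precisely your Fatou step); this gives \eqref{7}--\eqref{8}, and only then is Gronwall applied to $\nu^n_m$. The repair therefore uses ingredients you already have --- bound $\kappa^n_m(t)$ through the limit function rather than through the initial datum --- but as written your plan would stall exactly at the point you flagged as the main obstacle.
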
 

\begin{proof}
Let $n$ be an arbitrarily fixed positive integer and let $\psi_0^n$ be defined as in
the statement of Lemma~\ref{lemma2_1}.
As we stated above, following \eqref{tic}, the initial value problem \eqref{truncation}--\eqref{tic} has a unique
solution, $\psi^n= (\psi_i^n)_{1\leqs i\leqs n}$, which is globally defined, non-negative and, by Lemma~\ref{lemma1}, density conserving.
By defining $\psi^n_i(t)=0$ when $i>n$ we can consider $\psi^n(t)$ as an element of $X^+$, for all $t$, and thus 
\begin{equation}\label{normbound}
\|\psi^n(t)\|=\sum_{i=1}^{\infty} i\psi^{n}_i(t)=\sum_{i=1}^{n}i\psi^{n}_i(t)=
\sum_{i=1}^{n}i\psi_{0 i}^n = \sum_{i=1}^{n}i\psi_{0 i}\leqs  \sum_{i=1}^{\infty} i\psi_{0 i}=\|\psi_0\|.
\end{equation}
By Lemma~\ref{lemma2_1} and \eqref{2}
for each $i$, $\exists$ a subsequence of $\psi^n$ (not relabelled) $\&$ a function 
$\psi_i:[0,\infty) \to \mathbb{R}$, are of bounded variation on each subset of $[0,\infty)$, such that 
$\psi^{n}_i(t)$ converges to $\psi_i(t)$ as $n$ approaches $\infty$, for every $t \in \mathbb{R}^{+}$. 
Thus $\forall$ $t\geqs 0$,
\begin{equation}\label{solutionbound}
\psi_i(t) \geqs 0 \hspace{.4 cm} \text{and} \hspace{.4 cm} ||\psi(t)|| \leqs ||\psi_0||.
\end{equation}

Our goal is to prove that this limit function $\psi$ is a mild solution of the initial value problem 
\eqref{sd}--\eqref{sdic}, i.e., fulfills the
conditions in Definition~\ref{defsol}. This will be done by passing to the limit $n\to\infty$ in the integrated
version of the truncated problem \eqref{truncation}--\eqref{tic}, namely
\begin{equation}\label{int1a}
\psi_i^n(t) = 
\psi_{0 i} + \int_0^t\biggl(\psi_{i-1}^n(s)\sum_{j=1}^{i-1}jV_{i-1,j}\psi_j^n(s) - 
\psi_i^n(s)\sum_{j=1}^{i} jV_{i,j}\psi_j^n(s) -
\psi_i^n(s)\sum_{j=i}^{n-1}V_{i,j}\psi_j^n(s)\biggr)ds.
\end{equation}
To do this, and also to satisfy condition 2  in Definition~\ref{defsol}, we need to
prove that, for every fixed $i \in\mathbb{N}$, $T\geqs 0,$ and $\varepsilon >0,$ there 
exists $m$ and $N_0$, with $N_0 > m \geqs i$, such that, for all $n>N_0$,
\begin{equation}\label{xnmtint}
\int_{0}^{T} \nu^{n}_{m}(t)dt \leqs \varepsilon,
\end{equation}
where $\nu_m^n$ was defined in \eqref{xnmt}.
This can be achieved by integrating \eqref{dxnmt} in $[0, t]$ and using \eqref{qnmtex} to yield
\begin{align*}
\nu^{n}_m(t)&=\nu^{n}_m(0)+ \int_{0}^{t} \bigg(\sum_{i=m}^{n-1}\sum_{j=1}^{i}jV_{i,j}\psi^{n}_i(s)\psi^{n}_j(s)
 + m\psi^{n}_{m-1}\sum_{j=1}^{m-1}jV_{m-1,j}\psi^{n}_{j} - \\
& \hspace*{7cm}  - \sum_{i=m}^{n-1}\sum_{j=i}^{n-1}iV_{i,j}\psi^{n}_i(s)\psi^{n}_j(s)\bigg)ds\\
&=\nu^{n}_{m}(0)+\kappa^{n}_m(t) - \kappa^{n}_m(0) +\int_{0}^{t} \bigg(\psi^{n}_{2m}(s)\sum_{j=1}^{2m}jV_{2m,j}\psi^{n}_j(s)+\sum_{i=2m+1}^{n-1}\sum_{j=1}^{i}jV_{i,j}\psi^{n}_i(s)\psi^{n}_j(s) \;-\\ 
& \hspace*{4.0cm} - \sum_{i=2m+1}^{n-1}\sum_{j=i}^{n-1}iV_{i,j}\psi^{n}_i(s)\psi^{n}_j(s)
+2m\sum_{2m+1}^{n-1}\sum_{j=i}^{n-1}V_{i,j}\psi^{n}_i(s)\psi^{n}_j(s)\bigg)ds.
\end{align*}
Some algebraic manipulations of the second double sum above provides
\begin{align*}
\sum_{i=2m+1}^{n-1}\sum_{j=1}^{i}jV_{i,j}\psi^{n}_i(s)\psi^{n}_j(s)
&=\sum_{i=2m+1}^{n-1}\sum_{j=1}^{2m}jV_{i,j}\psi^{n}_i(s)\psi^{n}_j(s) + \sum_{i=2m+1}^{n-1}\sum_{j=i}^{n-1}iV_{j,i}\psi^{n}_j(s)\psi^{n}_i(s).
\end{align*}
Substituting this into the above expression for $\nu^{n}_m(t)$ gives 
\begin{align}\label{J}
 \nonumber
\nu^{n}_m(t) 
&=\nu^{n}_{m}(0)+\kappa^{n}_m(t) - \kappa^{n}_m(0) +\int_{0}^{t} 
\bigg( \sum_{i=2m}^{n-1}\sum_{j=1}^{2m}jV_{i,j}\psi^{n}_i(s)\psi^{n}_j(s)\, + \\
& \hspace*{6.0cm} +2m\sum_{i=2m+1}^{n-1}\sum_{j=i}^{n-1}V_{i,j}\psi^{n}_i(s)\psi^{n}_j(s)\bigg)ds.
\end{align}
By \eqref{normbound}, \eqref{solutionbound}, and the pointwise convergence of $\psi_i^n$ to $\psi_i$ 
we conclude that, for all $t\in [0,T],$
$\forall \varepsilon>0, \forall p> \frac{4\|\psi_0\|}{\varepsilon}, \exists N_0: \forall n>N_0,$
\[
\sum_{i=1}^\infty\left|\psi_i^n(t)-\psi_i(t)\right| = \sum_{i=1}^{p-1}\left|\psi_i^n(t)-\psi_i(t)\right| + \sum_{i=p}^\infty\left|\psi_i^n(t)-\psi_i(t)\right| < \frac{\varepsilon}{2} + \frac{2}{p}\|\psi_0\| < \varepsilon,
\]
that enables us to take $n \to \infty$ in the definition of  $\kappa^{n}_m(t)$ in (\ref{qnmt}) and yields 
\begin{equation}\label{6}
\kappa^{n}_m(t)\xyrightarrow[n\to\infty]{} \sum_{i=m}^{2m} i\psi_i(t)+2m\sum_{i=2m+1}^{\infty}\psi_i(t) 
=: \kappa_m(t) \leqs  \sum_{i=m}^{\infty}i\psi_i(t),
\end{equation}
and so $\lim_{m\to \infty}\kappa_m(t)=0$ and $|\kappa_m(t)| \leqs \|\psi_0\|,$ for all $t \in [0,T].$
Therefore,  $\forall \varepsilon >0$, $\exists M, N_0$ with $N_0>M$, such that, $\forall m>M$, $n>N_0$ and $n\geqs 2m+1$, 
\begin{equation}\label{7}
\kappa_m^n(t) \leqs \tfrac{1}{3}\varepsilon, 
\end{equation}
and \begin{equation}\label{8}
\kappa^{n}_m(0)  \leqs \tfrac{1}{3}\varepsilon.
\end{equation}

By (\ref{7}) and (\ref{8}) and using the assumption $V_{i,j}\leqs (i+j)$ we can estimate the right-hand side of \eqref{J} as 
follows (redefining) 
\begin{align*}
x^{n}_m(t) 
&\leqs \varepsilon +\int_{0}^{t} \bigg( \sum_{i=2m}^{n-1}\sum_{j=1}^{2m}j(i+j)\psi^{n}_i(s)\psi^{n}_j(s) +
2m\sum_{i=2m+1}^{n-1}\sum_{j=i}^{n-1}(i+j)\psi^{n}_i(s)\psi^{n}_j(s)\bigg)ds  \\
& \leqs \varepsilon +\int_{0}^{t} \bigg( 2\sum_{i=2m}^{n-1}\sum_{j=1}^{2m}ij \psi^{n}_i(s)\psi^{n}_j(s) +
4m\sum_{i=2m+1}^{n-1}\sum_{j=i}^{n-1}i \psi^{n}_i(s)\psi^{n}_j(s)\bigg)ds  \\
& \leqs \varepsilon +\int_{0}^{t} \bigg( 2\sum_{i=m}^{n}i \psi^{n}_i(s)\sum_{j=1}^{n}j \psi^{n}_j(s) +
4\sum_{i=m}^{n}i \psi^{n}_i(s)\sum_{j=i}^{n}j \psi^{n}_j(s)\bigg)ds \\
& \leqs \varepsilon + 6\|\psi_0\|\int_{0}^{t} \nu_m^n(s)ds.
\end{align*}

Hence, thanks to Gronwall's lemma, we get, for all $t\in [0, T],$
\begin{equation}\label{9}
\nu^{n}_m(t) \leqs k_1\varepsilon
\end{equation}
where $k_1=e^{6T}$,
which implies that $\forall\varepsilon>0, \exists M, N_0$ with $N_0>M$, 
such that, $\forall m>M$, $n>N_0$ and $n\geqs 2m+1$, 
\begin{equation}\label{int9}
\int_{0}^{t}\nu^{n}_{m}(s)ds \leqs \varepsilon k_1T,\quad\text{for all $t\in [0, T]$}.
\end{equation}
Since, $\psi^{n}_i(t)$ is point-wise convergent to $\psi_i(t)$, the above expression entails that, for all $\varepsilon>0$,
there exists $M$ such that, for all $m> M,$ we have
$$\int_{0}^{T}\sum_{i=m}^{\infty}i\psi_i(t)dt \leqs \varepsilon.$$
Hence, when $V_{i,j} \leqs (i+j)$, for all $i\geqs 1,$
\begin{equation}\label{10}
\int_{0}^{T}\sum_{j=1}^{\infty}V_{i,j}\psi_j(t)dt <\infty,
\end{equation}
thus establishing  (b) in Definition~\ref{defsol}.

\medskip

Now, for every fixed $i$, take $n>i$ sufficiently large and, for any $\ell$ such that $i < \ell < n-1$, write \eqref{int1a} as
\begin{gather*}
\bigg|\psi_{i}^{n}(t)-\psi_{i}(0) - \int_{0}^{t}\Big(\psi_{i-1}^{n}(s)\sum_{j=1}^{i-1}jV_{i-1,j}\psi^{n}_j - \psi^{n}_i(s)\sum_{j=1}^{i}jV_{i,j}\psi^{n}_j(s) - \psi^{n}_i(s)\sum_{j=i}^{\ell}V_{i,j}\psi^{n}_j(s)\Big)ds\bigg|  \\
= \psi^{n}_i(s)\int_0^t\sum_{j=\ell + 1}^{n-1}V_{i,j}\psi^{n}_j(s)ds \;\leqs \; 2\|\psi_0\|\int_0^t \nu_{\ell+1}^n(s)ds.
\end{gather*}
Thus, from \eqref{int9}, for all $\varepsilon>0$, there exists $M$ such that, for all $\ell+1>M$ and all $n$ sufficiently large, 
the right hand side can be bounded above by $2\varepsilon \|\psi_0\|k_1T.$
Considering that each sum in the left hand side has a fixed and finite number of terms, that $\psi_j^n(s)\to \psi_j(s)$ pointwise as 
$n\to \infty$, and each of the three terms inside the integral is bounded by $2\|\psi_0\|^2$, we can use the dominated convergence 
theorem and take $n\to\infty$ to conclude that, for every $\varepsilon>0$, there exists $M$ such that, for all
$\ell \geqs M,$ we have
\begin{gather*}
\bigg|\psi_{i}(t)-\psi_{i}(0) - \int_{0}^{t}\Big(\psi_{i-1}(s)\sum_{j=1}^{i-1}jV_{i-1,j}\psi_j - \psi_i(s)\sum_{j=1}^{i}jV_{i,j}\psi_j(s) - \psi_i(s)\sum_{j=i}^{\ell}V_{i,j}\psi_j(s)\Big)ds\bigg|  \\
\leqs 2\varepsilon \|\psi_0\|k_1T.
\end{gather*}
Hence, by the arbitrariness of $\varepsilon$, we can let $\ell\to\infty$ and conclude that $\psi=(\psi_i)$ satisfy \eqref{solution}, 
which completes the proof. \end{proof}

Next we establish that the subsequence $\psi^{n_k}$ of solutions to the truncated system which
converges to the solution $\psi$ of \eqref{sd}-\eqref{sdic} actually does so in the strong topology
of $X$, uniformly for $t$ in compact subsets of $[0,\infty)$.
\begin{corollary}
Let $\psi^{n_k}$ be the pointwise convergent subsequence of solutions to $(\ref{1a}- \ref{tic})$. Then, $\psi^{n_k} \to \psi$ in $X$ uniformly on compact subsets of $[0,\infty)$.
\end{corollary}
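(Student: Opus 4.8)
The plan is to prove, for each fixed $T>0$, that $\sup_{t\in[0,T]}\|\psi^{n_k}(t)-\psi(t)\|\to 0$ as $k\to\infty$; since $T$ is arbitrary, this yields convergence in $X$ uniformly on every compact subset of $[0,\infty)$. I would write
\[
\|\psi^{n_k}(t)-\psi(t)\| = \sum_{i=1}^{\infty} i\,\bigl|\psi_i^{n_k}(t)-\psi_i(t)\bigr|
\]
and split the sum at an index $m$ into a finite head $\sum_{i=1}^{m-1}$ and a tail $\sum_{i=m}^{\infty}$, controlling each piece uniformly in $t\in[0,T]$.

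For the tail, the triangle inequality gives the bound $\nu_m^{n_k}(t)+\sum_{i=m}^{\infty} i\psi_i(t)$, where I use that $\psi_i^{n_k}(t)=0$ for $i>n_k$ so that the tail of $\psi^{n_k}$ is exactly $\nu_m^{n_k}(t)$. The first term is precisely the quantity estimated in \eqref{9}: for every $\varepsilon>0$ there are $M,N_0$ such that $\nu_m^{n_k}(t)\leqs k_1\varepsilon$ for all $t\in[0,T]$ whenever $m>M$, $n_k>N_0$ and $n_k\geqs 2m+1$, and this bound is uniform in both $t$ and $k$. For the second term I would pass to the limit on finite blocks: for every finite $p$ and $k$ large enough that $n_k\geqs p$,
\[
\sum_{i=m}^{p} i\psi_i(t) = \lim_{k}\sum_{i=m}^{p} i\psi_i^{n_k}(t) \leqs \liminf_{k}\nu_m^{n_k}(t) \leqs k_1\varepsilon,
\]
and letting $p\to\infty$ gives $\sum_{i=m}^{\infty} i\psi_i(t)\leqs k_1\varepsilon$, again uniformly in $t$. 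Hence the tail is at most $2k_1\varepsilon$ uniformly in $t\in[0,T]$ and in $k$.

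For the head, I would invoke Lemma~\ref{lemma2_1}: for each $i$ the family $\{\psi_i^n\}_n$ is uniformly bounded with uniformly bounded derivative, hence equicontinuous on $[0,T]$. An equicontinuous, pointwise-convergent sequence on a compact interval converges uniformly, so $\psi_i^{n_k}\to\psi_i$ uniformly on $[0,T]$ for every $i$. Since the head is a finite sum, for fixed $m$ one has $\sum_{i=1}^{m-1} i\,\sup_{t\in[0,T]}|\psi_i^{n_k}(t)-\psi_i(t)|\to 0$ as $k\to\infty$. Combining the two estimates: given $\eta>0$, first choose $\varepsilon$ (hence $m$ and a threshold on $k$) so that the tail is below $\eta/2$ uniformly, then enlarge $k$ so that the finite head is below $\eta/2$; this yields $\sup_{t\in[0,T]}\|\psi^{n_k}(t)-\psi(t)\|\leqs\eta$ for all large $k$.

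The main obstacle is the uniform-in-$t$, uniform-in-$k$ control of the tail, i.e.\ the statement that no mass escapes to infinity uniformly in time and along the sequence; but this is already packaged in the a priori estimate \eqref{9}, so the only point requiring care is transferring that bound from the truncated solutions to the limit $\psi$ via the termwise passage to the limit on finite blocks above. The remaining step, upgrading componentwise pointwise convergence to uniform convergence on $[0,T]$ from the equicontinuity furnished by Lemma~\ref{lemma2_1}, is routine.
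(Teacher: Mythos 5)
Your proof is correct, but it takes a genuinely different route from the paper's. You bound $\sup_{t\in[0,T]}\|\psi^{n_k}(t)-\psi(t)\|$ directly by a head/tail splitting of the weighted $\ell^1$ sum: the tail is controlled uniformly in $t$ and $k$ by the Gronwall estimate \eqref{9} for $\nu_m^{n_k}$, transferred to the limit $\psi$ by a Fatou-type passage on finite blocks, and the head is handled by upgrading componentwise pointwise convergence to uniform convergence via the equicontinuity furnished by Lemma~\ref{lemma2_1}. The paper instead follows the Ball--Carr monotonicity technique: it introduces the auxiliary functions $\xi^{n}_m(t)=e^{-t}\bigl[\mu^{n}_1(t)-\sum_{i=1}^{m-1}i\psi^{n}_i(t)+(2m+2)\mu^{n}_1(0)^2\bigr]$ of \eqref{znmt1}, shows $\frac{d\xi^{n}_m}{dt}\leqs 0$ using $V_{i,j}\leqs i+j$ and mass conservation (Lemma~\ref{lemma1}), invokes the Dini/P\'olya principle that pointwise convergent monotone functions with continuous limit converge uniformly on compacts, and then deduces continuous convergence of the norms ($t_n\to t$ implies $\|\psi^{n}(t_n)\|\to\|\psi(t)\|$), which together with componentwise convergence yields convergence in $C(K,X)$. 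What each buys: your argument is more self-contained and quantitative — it recycles \eqref{9} and Lemma~\ref{lemma2_1}, which are already proved, and makes explicit the mechanism that no mass escapes to large $i$ uniformly in $t$ and along the subsequence; the paper's device gets uniform norm convergence cheaply from monotonicity, at the cost of leaving the Dini-type lemma and the final step (norm convergence plus componentwise convergence implies strong convergence in $X$) implicit. The two points in your write-up that require care are both handled correctly: the bound \eqref{9} carries the side conditions $n>N_0$ and $n\geqs 2m+1$ and holds along the (not relabelled) subsequence from the existence proof, so for fixed $m$ you must restrict to large $k$ before letting $p\to\infty$ in the finite-block step, exactly as you do; and the equicontinuity-plus-pointwise-convergence upgrade on $[0,T]$ is indeed routine given the uniform derivative bound of Lemma~\ref{lemma2_1}.
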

\begin{proof}
To prove this, we prove that $\psi^{n_k}_i(t) \to \psi_i(t)$ for each $i$ uniformly on the compact subsets of $[0,\infty)$. For this, let $n_k$ be $n$ and for each $I<m$ 
\begin{equation}\label{znmt1}
\xi^{n}_m(t):=e^{-t} \left[\mu^{n}_1(t)-\sum_{i=1}^{m-1} i\psi^{n}_i(t) +(2m+2)\mu^{n}_1(0)^2\right].
\end{equation}
Now, differentiating (\ref{znmt1}) gives 
\begin{align}\label{dznmt1}
\frac{d\xi^{n}_m(t)}{dt}=e^{-t}\left[\frac{d\mu^{n}_1(t)}{dt}-\sum_{i=1}^{m-1} i\frac{d\psi^{n}_i(t)}{dt}\right]-e^{-t}\left[\mu^{n}_1(t)-\sum_{i=1}^{m-1}i\psi^{n}_i(t)+(2m+2)\mu^{n}_1(0)^2\right]
\end{align}
where 
\begin{equation}
\frac{d}{dt}\sum_{i=1}^{m-1} i\psi^{n}_i(t)=\sum_{i=1}^{m-2}\sum_{j=1}^{i} jV_{i,j}\psi^{n}_{i}\psi^{n}_{j}
-\sum_{i=1}^{m-1}\sum_{j=i}^{n-1}iV_{i,j}\psi^{n}_i \psi^{n}_j-(m-1)\psi^{n}_{m-1}\sum_{j=1}^{m-1}jV_{m-1,j}\psi^{n}_j.
\end{equation}
Using the above expression , Lemma \ref{lemma1} and $V_{i,j} \leqs (i+j)$ $\forall$ $i,j$ in (\ref{dznmt1}), one can obtain
\begin{align*}
\frac{d\xi^{n}_m(t)}{dt}\leq& e^{-t}\left[\sum_{i=1}^{m-1}\sum_{j=i}^{n-1} j(i+j)\psi^{n}_i \psi^{n}_j+(m-1) \psi^{n}_{m-1}\sum_{j=1}^{m-1}j(m-1+j) \psi^{n}_j-2\mu^{n}_1(0)^2-2m\mu^{n}_1(0)^2\right].
\end{align*}
Some simplifications guarantee that
$$\frac{d\xi^{n}_m(t)}{dt} \leqs 0, \hspace{.5 cm} n \geqs m, \hspace{.2 cm} t \in [0,T].$$ 
Hence, $\xi^{n}_m(t) \to \xi_m(t)$ uniformly on compact subsets of $[0,T)$ where 
\begin{equation*}
\xi_m(t):= e^{-t} \left[\mu_1(t)-\sum_{i=1}^{m-1} i\psi_i(t) +(2m+2)\mu_1(0)^2\right].
\end{equation*}
Let, $K \subset [0,\infty)$ be compact and $t_{n} \to t$ in $K$, then
$$\lim_{n \to \infty} \|\psi^{n}(t_{n})\|=\lim_{n \to \infty} \sum_{i=1}^{\infty} i\psi^{n}_i(t_{n})=\sum_{i=1}^{\infty} i\psi_i(t)=\|\psi(t)\|$$
which ensures that $\|\psi^n\| \to \|\psi\|,$ in $C(K,X)$. 
\end{proof}


\section{All Solutions Conserve Density}\label{density conservation}
In this section we prove that, under the assumption on the rate coefficients we have been using, 
all solutions of \eqref{sd}-\eqref{sdic} conserve density.

Let $\psi=(\psi_i)\in X^+$ be a solution of  
\eqref{solution}
in $[0, T]$. Multiplying each equation in \eqref{solution} by $g_i$ and adding  from $i=1$ to $n$, we have, after some algebraic manipulations, 
for all $t\in [0, T],$
\begin{multline}\label{trunmom1}
\sum_{i=1}^ng_i\psi_i(t) - \sum_{i=1}^ng_i \psi_{0\,i}= \int_0^t\sum_{i=1}^n\sum_{j=i}^n (jg_{i+1}-jg_i-g_j)V_{i,j}\psi_i(s)\psi_j(s)ds \; \\
-\int_0^t \sum_{j=1}^n\sum_{i=n+1}^\infty g_jV_{i,j}\psi_i(s)\psi_j(s)ds - \int_0^tg_{n+1}\psi_n(s)\sum_{j=1}^njV_{n,j}\psi_j(s)ds.
\end{multline}

\medskip

We start by observing that, taking $g_i\equiv i$ in \eqref{trunmom1} we conclude that
\[
 \sum_{i=1}^n i\psi_i(t) \leqs \sum_{i=1}^ni\psi_{0\,i} \leqs \|\psi_0\|,
\]
and, as this inequality is valid for all $n$, we can take the limit as $n\to\infty$ and conclude 
the \emph{a priori\/} bound $\|\psi(t)\| \leqs \|\psi_0\|.$

We now use \eqref{trunmom1} to prove that, under the assumed conditions on $V_{i,j}$ all solutions conserve density:

\begin{theorem}\label{all}
Let $V_{i,j}\leqs (i+j)$ for all $i$ and $j$.
Let $\psi=(\psi_i)\in X^+$ be a solution of the Safronov-Dubovski equation \eqref{solution}. 
Then the total density of $\psi$ is constant.
\end{theorem}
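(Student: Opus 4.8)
The plan is to feed the special weight $g_i=i$ into the moment identity \eqref{trunmom1} and then show that the two remaining ``boundary'' terms vanish as $n\to\infty$. Putting $g_i=i$, the coefficient in the principal double sum collapses, since $jg_{i+1}-jg_i-g_j=j(i+1)-ji-j=0$, and the whole term drops out. What survives is
\begin{equation*}
\sum_{i=1}^n i\psi_i(t)-\sum_{i=1}^n i\psi_{0\,i}=-A_n(t)-B_n(t),
\end{equation*}
where I abbreviate
\begin{equation*}
A_n(t):=\int_0^t\sum_{j=1}^n\sum_{i=n+1}^\infty jV_{i,j}\psi_i(s)\psi_j(s)\,ds,
\qquad
B_n(t):=\int_0^t(n+1)\psi_n(s)\sum_{j=1}^n jV_{n,j}\psi_j(s)\,ds,
\end{equation*}
and both are non-negative. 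As $\sum_{i=1}^n i\psi_{0\,i}\to\|\psi_0\|$ and $\sum_{i=1}^n i\psi_i(t)\to\|\psi(t)\|\leqs\|\psi_0\|$ converge, density conservation $\|\psi(t)\|=\|\psi_0\|$ is equivalent to proving $A_n(t)+B_n(t)\to 0$.

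For $A_n$ I would use $V_{i,j}\leqs i+j$ together with $j\leqs n<i$, so that $j(i+j)\leqs 2ij$, giving
\begin{equation*}
A_n(t)\leqs 2\int_0^t\Big(\sum_{j=1}^n j\psi_j(s)\Big)\Big(\sum_{i=n+1}^\infty i\psi_i(s)\Big)ds\leqs 2\|\psi_0\|\int_0^t\sum_{i=n+1}^\infty i\psi_i(s)\,ds.
\end{equation*}
The inner tail is bounded above by $\|\psi_0\|$ and decreases pointwise to $0$ in $s$ (it is the remainder of a convergent series, and is measurable since each $\psi_i$ is continuous), so dominated convergence yields $A_n(t)\to 0$.

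The delicate term is $B_n$. The crude estimate $V_{n,j}\leqs n+j\leqs 2n$ only gives $B_n(t)\leqs 2n(n+1)\|\psi_0\|\int_0^t\psi_n(s)\,ds$, and the factor $n^2$ is not obviously tamed by the bare a priori bound $n\psi_n(s)\leqs\|\psi_0\|$. The way around this rests on two observations. First, the displayed identity already forces $\lim_n B_n(t)=\|\psi_0\|-\|\psi(t)\|$ to \emph{exist}, because $A_n\to0$ and the two partial sums converge. Second, by Tonelli's theorem and the a priori bound,
\begin{equation*}
\sum_{n=1}^\infty\Big(n\int_0^t\psi_n(s)\,ds\Big)=\int_0^t\sum_{n=1}^\infty n\psi_n(s)\,ds=\int_0^t\|\psi(s)\|\,ds\leqs t\|\psi_0\|<\infty,
\end{equation*}
so $a_n:=n\int_0^t\psi_n(s)\,ds$ is a summable non-negative sequence. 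The elementary fact that $\sum_n a_n<\infty$ forces $\liminf_n n\,a_n=0$ then gives $\liminf_n n^2\int_0^t\psi_n(s)\,ds=0$, and hence $\liminf_n B_n(t)=0$ (the lower-order term $a_n\to0$). Since $\lim_n B_n(t)$ exists it must equal this $\liminf$, so $\lim_n B_n(t)=0$ and therefore $\|\psi(t)\|=\|\psi_0\|$ for every $t\in[0,T]$.

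I expect $B_n$ to be the main obstacle: the naive pointwise estimate loses an entire power of $n$, and the remedy is not a sharper bound but the combination of \emph{(i)} the identity guaranteeing that $\lim_n B_n$ exists, with \emph{(ii)} the summability argument giving $\liminf_n B_n=0$, so that the two together pin the limit at zero.
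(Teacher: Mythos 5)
Your proof is correct, and it takes a genuinely different route from the paper's at the decisive step. Both arguments start from the identity \eqref{trunmom1}, but the paper never feeds the unbounded weight $g_i=i$ into the limiting procedure: it works with the capped weights $g_i^A=i\wedge A$, for which the bulk coefficient no longer cancels but instead produces the flux term \eqref{sum1}, controlled uniformly in $n$ by the tail product $2\sum_{j\geqs A}j\psi_j\sum_{i\geqs A}i\psi_i$; crucially, the cap also tames your problematic boundary term, since \eqref{sum3} is then bounded by $2\|\psi_0\|An\psi_n(s)$, which for fixed $A$ tends to zero pointwise as $n\to\infty$ (because $n\psi_n(s)$ is the general term of a convergent series, dominated by $\|\psi_0\|$), so three applications of dominated convergence plus an $\varepsilon/5$ bookkeeping, taking first $n\to\infty$ and then $A\to\infty$, close the proof. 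You instead keep $g_i=i$, so the bulk term vanishes identically ($jg_{i+1}-jg_i-g_j=0$) and only the boundary terms $A_n$, $B_n$ survive; your treatment of $A_n$ is routine, and your handling of $B_n$ --- noting that the identity together with $A_n\to 0$ forces $\lim_n B_n(t)=\|\psi_0\|-\|\psi(t)\|$ to \emph{exist}, while Tonelli and the a priori bound make $a_n=n\int_0^t\psi_n(s)\,ds$ summable, whence $\liminf_n na_n=0$ and therefore $\liminf_n B_n(t)=0$, pinning the limit at zero --- is a correct squeeze that the paper does not use; all ingredients you invoke are indeed available at that point of the paper (the identity \eqref{trunmom1}, the bound $\|\psi(s)\|\leqs\|\psi_0\|$ obtained there with $g_i\equiv i$, and the elementary fact that a non-negative summable sequence satisfies $\liminf_n na_n=0$). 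Comparing the two: your argument is shorter and avoids the double limit in $(n,A)$, at the price of a subsequence/liminf trick that is tied to the exact cancellation afforded by $g_i=i$ and gives no quantitative handle on the mass flux; the paper's capped-weight scheme is the standard Ball--Carr-type argument, more robust in that it localizes where mass could escape (the term \eqref{sum1}) and adapts to weights and kernels for which the bulk term does not cancel.
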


\begin{proof}
Let $A\in \Nb$ be fixed, and consider the sequence $(g_i^A)\in \ell^\infty$ defined by
\begin{equation}\label{giA}
g_i^A = i \wedge A,
\end{equation}
where $ a \wedge b=\min\{a,b\}$.
Then
\[
jg_{i+1}^A - jg_i^A-g_j^A = \begin{cases} -A & \text{on $\{(i,j): A\leqs i\leqs j\leqs n\}$},\\
0 & \text{on $\{(i,j): 1\leqs i \leqs A-1 \;\text{and}\;  i\leqs j\leqs n\}$},\end{cases}
\]
and \eqref{trunmom1} becomes, for $n>A$,
\begin{eqnarray}
\lefteqn{\sum_{i=1}^ng_i^A\psi_i(t) - \sum_{i=1}^ng_i^A \psi_{0\,i} =}\label{sumgA} \\
& = & \!  - \int_0^t A\sum_{j=A}^n \sum_{i=A}^j V_{i,j}\psi_i(s)\psi_j(s)ds \label{sum1}  \\
& &\! -\int_0^t \biggl(\sum_{j=1}^A\sum_{i=n+1}^\infty jV_{i,j}\psi_i(s)\psi_j(s) 
+ A\sum_{j=A+1}^n\sum_{i=n+1}^\infty V_{i,j}\psi_i(s)\psi_j(s)\biggr) ds   \label{sum2}  \\
& &\!  -  \int_0^tA \psi_n(s)\sum_{j=1}^njV_{n,j}\psi_j(s)ds.\label{sum3}
\end{eqnarray}

\medskip

We first estimate the term in \eqref{sum1}:
\begin{align}
A\sum_{j=A}^n \sum_{i=A}^j V_{i,j}\psi_i\psi_j & \leqs A\sum_{j=A}^n  \psi_j\sum_{i=A}^j i \psi_i + 
                                                             A\sum_{j=A}^n  j  \psi_j\sum_{i=A}^j\psi_i \nonumber \\
                                                                      & = A\sum_{j=A}^n\frac{1}{j} j \psi_j\sum_{i=A}^j i  \psi_i + 
                                                                                  A\sum_{j=A}^n  j \psi_j\sum_{i=A}^j\frac{1}{i}i\psi_i \nonumber \\
                                                                      & \leqs 2\sum_{j=A}^n j \psi_j\sum_{i=A}^n i \psi_i \nonumber \\
                                                                      & \leqs 2\sum_{j=A}^\infty j \psi_j\sum_{i=A}^\infty i \psi_i. \label{bound1A}
\end{align}
Thus, $\psi \in X^{+}$ implies that \eqref{bound1A} converges to zero as $A\to\infty.$
Furthermore, since \eqref{bound1A} is bounded above by $2\|\psi_0\|^2$, the dominated convergence theorem implies that,
for all $\varepsilon>0$ there exists $A_0$ such that, for all $n>A\geqs A_0$ the  absolute value of  \eqref{sum1}
is smaller that $\frac{\varepsilon}{5}.$

\medskip

Consider now \eqref{sum2}. For the first double sum, observe that for $n>A,$
\begin{align}
\sum_{j=1}^A\sum_{i=n+1}^\infty jV_{i,j}\psi_i\psi_j & \leqs \sum_{j=1}^Aj\psi_j \sum_{i=n+1}^\infty i \psi_ i +
                                                                                            \sum_{j=1}^Aj^{2}\psi_j \sum_{i=n+1}^\infty \psi_ i \nonumber \\
& \leqs \|\psi_0\|\sum_{i=n+1}^\infty i \psi_ i + 
            \sum_{j=1}^Aj^{2}\psi_j \frac{1}{n+1}\sum_{i=n+1}^\infty i \psi_ i \nonumber \\
& \leqs \|\psi_0\|\sum_{i=n+1}^\infty i \psi_ i + 
            \frac{A}{A+1}\sum_{j=1}^Aj\psi_j \sum_{i=n+1}^\infty i \psi_ i \nonumber \\ 
& \leqs \|\psi_0\|\sum_{i=n+1}^\infty i \psi_ i + 
           \|\psi_0\| \sum_{i=n+1}^\infty i \psi_ i \nonumber \\ 
& \leqs 2\|\psi_0\| \sum_{i=n+1}^\infty i \psi_i.   \label{bound2A}
\end{align}
For the second double sum in \eqref{sum2} we have a similar estimate:
\begin{align}
A\sum_{j=A+1}^n\sum_{i=n+1}^\infty V_{i,j}\psi_i\psi_j & \leqs A\sum_{j=A+1}^n\psi_j \sum_{i=n+1}^\infty i \psi_ i +
                                                                                            A\sum_{j=A+1}^nj \psi_j \sum_{i=n+1}^\infty \psi_ i \nonumber \\
& \leqs \frac{A}{A+1} \sum_{j=A+1}^nj\psi_j \sum_{i=n+1}^\infty i \psi_ i +
            A\sum_{j=A+1}^nj\psi_j \frac{1}{n+1}\sum_{i=n+1}^\infty i \psi_ i \nonumber \\
& \leqs \|\psi_0\| \sum_{i=n+1}^\infty i \psi_ i + \frac{A}{n+1}\|\psi_0\|\sum_{i=n+1}^\infty i \psi_ i \nonumber \\
& \leqs 2\|\psi_0\| \sum_{i=n+1}^\infty i \psi_i.   \label{bound3A}
\end{align}
Thus, by \eqref{bound2A} and \eqref{bound3A} we conclude that 
the integrand function in \eqref{sum2} is bounded by $4\|\psi_0\|^2$ and converges pointwise to zero as $n\to\infty$,
for each fixed $A$. Hence, again by the dominated convergence theorem we conclude that, as previously, for
all $\varepsilon>0$, there exists $A_0$ such that, for all $n>A_0$, the absolute value of the integral \eqref{sum2} is 
smaller than $\frac{\varepsilon}{5}.$

\medskip

Finally, let us consider \eqref{sum3}
\begin{align}A \psi_n\sum_{j=1}^njV_{n,j}\psi_j & \leqs A \psi_n\sum_{j=1}^nj(n + j)\psi_j \nonumber \\
& \leqs 2 A n \psi_n\sum_{j=1}^nj\psi_j \nonumber \\
& \leqs 2\|\psi_0\|A n \psi_n. \label{bound4}
\end{align}
Clearly, for each fixed $A$ \eqref{bound4} converges to zero as $n\to\infty$ and it is bounded above by $A\|\psi_0\|^2,$
and so the dominated convergence theorem implies that, 
for every $\varepsilon>0$, there exits $A_0=A_0(\varepsilon)$ 
such that, for any fixed $A>A_0,$ there exists $n_0=n_0(\varepsilon, A)$
such that, for all $n>n_0\vee A$, the absolute value of \eqref{sum3} is smaller than $\frac{\varepsilon}{5}.$

\medskip

To estimate \eqref{sumgA} observe that, for every $n>A$, we can write
\begin{align*}
\left|\sum_{i=1}^ng_i^A\psi_i(t) - \sum_{i=1}^ng_i^A \psi_{0\,i}\right| & \geqs \left|\sum_{i=1}^Ai\psi_i(t) - \sum_{i=1}^Ai \psi_{0\,i}\right| - 
A\left|\sum_{i=A+1}^n \psi_i(t) - \sum_{i=A+1}^n \psi_{0\,i}\right|,
\end{align*}
and thus,
\begin{equation}\label{desigualdade}
\left|\sum_{i=1}^Ai\psi_i(t) - \sum_{i=1}^A i \psi_{0\,i}\right| \leqs \sum_{i=A+1}^\infty i \psi_i(t) + \sum_{i=A+1}^\infty i \psi_{0\,i}
+ \left|\sum_{i=1}^ng_i^A \psi_i(t) - \sum_{i=1}^ng_i^A \psi_{0\,i}\right|.
\end{equation}
Now, for every $\varepsilon>0$ there exists $A_0$ such that, for all $A>A_0$, each of the first two sums in the right hand side
of \eqref{desigualdade} can be made smaller than $\frac{\varepsilon}{5}$, and since
the estimates of \eqref{sum1}-\eqref{sum3} obtained previously allow us to have the last term
in the right hand side of \eqref{desigualdade} is smaller that $\frac{3}{5}\varepsilon$, we conclude that
\[
\forall \varepsilon >0,\, \exists A_0: \forall A>A_0,\;
\left|\sum_{i=1}^Ai\psi_i(t) - \sum_{i=1}^Ai \psi_{0\,i}\right| < \varepsilon,
\]
which proves the result.
\end{proof}


\section{Differentiability}\label{regularity}

This section is devoted in proving that the solution of the S-D model is first-order differentiable, if the rate coefficients satisfy $V_{i,j} \leqs i^\alpha + j^\alpha$ for $\alpha \in [0,1].$ This requires the boundedness of $(\alpha+1)$-moments of the solutions and an invariance result, which are proved below in Lemma \ref{lemma4} and Theorem \ref{momenttheorem}, respectively.
 
\medskip
\begin{lem}\label{lemma4}
Let the non-negative kernel 
$V_{i,j}$ satisfy $V_{i,j} \leqs i^{\alpha}+j^{\alpha},$ for all $i,j \geqs 1,$ and for some fixed $0 \leqs \alpha \leqs 1$.
For any $T\in (0, \infty)$, let $\psi$ be a solution to (1)-(2) 
 in $[0, T]$ with initial condition $\psi_0\in X^+$. If the $(\alpha +1)$-moment of $\psi_0$, $\mu_{1+\alpha}(\psi_0)$,  
is bounded, then the $(\alpha +1)$-moment of $\psi(t)$ is also bounded for all $t \in [0, T].$ 
\end{lem}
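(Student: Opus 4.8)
The plan is to obtain an \emph{a priori} bound on $\mu_{1+\alpha}(\psi(t))=\sum_i i^{1+\alpha}\psi_i(t)$ by a truncated-weight Gronwall argument. The fundamental difficulty is that, \emph{a priori}, $\mu_{1+\alpha}(\psi(t))$ may well be infinite, so the formal moment identity obtained by testing \eqref{solution} against the unbounded weight $g_i=i^{1+\alpha}$ is not justified and any estimate written in terms of the full $\mu_{1+\alpha}$ is circular. To circumvent this I would test \eqref{solution} against a \emph{finitely supported} weight, for which every series converges, derive a linear differential inequality whose constant is \emph{independent of the support size}, and only at the very end remove the truncation by monotone convergence.

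Concretely, fix $A\in\Nb$ and set $g_i^A=i^{1+\alpha}$ for $i\leqs A$ and $g_i^A=0$ for $i>A$, so that $\mu^A(t):=\sum_{i=1}^{\infty}g_i^A\psi_i(t)=\sum_{i=1}^{A}i^{1+\alpha}\psi_i(t)<\infty$ for every $t$. Multiplying \eqref{solution} by $g_i^A$ and summing over $i$ is legitimate, since only finitely many terms are nonzero and condition (b) of Definition~\ref{defsol} guarantees absolute convergence of the inner series appearing in the last term. Reorganising the resulting sums exactly as in the computation of Lemma~\ref{lemma0} — so that the weight increment $g_{j+1}^A-g_j^A$ is attached to the \emph{larger} colliding cluster and thus carries only a factor of the \emph{smaller} index — yields the full-system analogue
\[
\mu^A(t)=\mu^A(0)+\int_0^t\sum_{i=1}^{\infty}\sum_{j=i}^{\infty}\bigl(i(g_{j+1}^A-g_j^A)-g_i^A\bigr)V_{i,j}\psi_i(s)\psi_j(s)\,ds .
\]

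To estimate the integrand I would first discard the nonpositive term $-g_i^A$, and then observe that $g_{j+1}^A-g_j^A$ is positive only for $j\leqs A-1$ (it is $-A^{1+\alpha}$ at $j=A$ and zero beyond), so restricting the outer index to $j\leqs A-1$ only increases the sum; there the mean value theorem gives $g_{j+1}^A-g_j^A=(j+1)^{1+\alpha}-j^{1+\alpha}\leqs(1+\alpha)2^{\alpha}j^{\alpha}$, while for $i\leqs j$ the kernel satisfies $V_{i,j}\leqs i^{\alpha}+j^{\alpha}\leqs 2j^{\alpha}$. Combining these and using $\sum_{i\leqs j}i\psi_i\leqs\mu_1\leqs\|\psi_0\|$ together with the key exponent inequality $j^{2\alpha}\leqs j^{1+\alpha}$ (valid precisely because $\alpha\leqs 1$), which folds the residual factor $j^{2\alpha}$ back into the controlled moment $\mu^A$, I obtain
\[
\sum_{i=1}^{\infty}\sum_{j=i}^{\infty}\bigl(i(g_{j+1}^A-g_j^A)-g_i^A\bigr)V_{i,j}\psi_i\psi_j \leqs (1+\alpha)2^{\alpha+1}\|\psi_0\|\,\mu^A(s).
\]
Gronwall's lemma then gives $\mu^A(t)\leqs \mu^A(0)\,e^{(1+\alpha)2^{\alpha+1}\|\psi_0\| t}\leqs \mu_{1+\alpha}(\psi_0)\,e^{(1+\alpha)2^{\alpha+1}\|\psi_0\| T}$ for all $t\in[0,T]$, a bound uniform in $A$; since $g_i^A\uparrow i^{1+\alpha}$ as $A\to\infty$, the monotone convergence theorem yields $\mu^A(t)\uparrow\mu_{1+\alpha}(\psi(t))$ and hence the claimed bound. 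The main obstacle throughout is making the estimate close \emph{uniformly in} $A$, with no $A$-dependent constants: this is exactly where placing the gain on the larger cluster (to retain a factor of the smaller index, pairing with the finite mass) and the inequality $2\alpha\leqs 1+\alpha$ are indispensable, and it is the only place the hypothesis $\alpha\leqs 1$ is used.
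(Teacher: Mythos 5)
Your proof is correct and follows essentially the same route as the paper's: the paper likewise tests \eqref{solution} against the weight $i^{1+\alpha}$ truncated at level $n$ (by summing the first $n$ equations, keeping the boundary and collision-loss terms explicit and then discarding them as nonnegative), bounds the increment $(i+1)^{1+\alpha}-i^{1+\alpha}$, exploits $2\alpha\leqs 1+\alpha$ together with the \emph{a priori} bound $\|\psi(t)\|\leqs\|\psi_0\|$ to close a Gronwall estimate with constant independent of the truncation, and finally passes to the limit $n\to\infty$. Your only deviations are cosmetic: a hard-cutoff weight instead of partial sums, and a mean-value bound $(j+1)^{1+\alpha}-j^{1+\alpha}\leqs(1+\alpha)2^{\alpha}j^{\alpha}$ with $V_{i,j}\leqs 2j^{\alpha}$ in place of the paper's two-term binomial expansion into four pieces, which gives you the slightly cleaner bound $\mu_{1+\alpha}(\psi_0)e^{(1+\alpha)2^{\alpha+1}\|\psi_0\|T}$ without the paper's additive $T\|\psi_0\|^2$ term.
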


\begin{proof}
Let $\psi=(\psi_i)$ be a solution of (1)-(2) 
in $[0, T]$ with initial condition $\psi_0$. 
By (7), 
adding the components from $i=1$ to $n$, we have, for all $t\in [0, T],$
\begin{multline}\label{trunmom}
\!\!\!\sum_{i=1}^n i^{1+\alpha}\psi_i(t) + \int_0^t\sum_{i=1}^n\sum_{j=i}^\infty i^{i+\alpha}V_{i,j}\psi_i(s)\psi_j(s)ds
+\int_0^t(n+1)^{1+\alpha} \psi_n(s)\sum_{j=1}^njV_{n,j}\psi_j(s)ds  \\
= \sum_{i=1}^ni^{1+\alpha} \psi_{0\,i} + \int_0^t\sum_{i=1}^n\sum_{j=1}^i\bigl(j(i+1)^{1+\alpha}-ji^{1+\alpha}\bigr)V_{i,j}\psi_i(s)\psi_j(s)ds.
\end{multline}
Due to the non-negativity of solutions, \eqref{trunmom} implies that
\begin{equation}\label{bound1}
\sum_{i=1}^n i^{1+\alpha}\psi_i(t)  \leqs \sum_{i=1}^n i^{1+\alpha} \psi_{0\,i} + \int_0^t\sum_{i=1}^n \sum_{j=1}^i\bigl(j(i+1)^{1+\alpha}-ji^{1+\alpha}\bigr)V_{i,j}\psi_i(s)\psi_j(s)ds.
\end{equation}
Since $\alpha\in [0,1]$ and $i\geqs 1$ we have 
$(i+1)^{1+\alpha}-i^{1+\alpha} \leqs (1+\alpha)i^\alpha + \frac{(1+\alpha)\alpha}{2!}$, and so,
\[
j\bigl((i+1)^{1+\alpha}-i^{1+\alpha}\bigr)V_{i,j} \leqs (1+\alpha)ji^{2\alpha} + (1+\alpha)j^{1+\alpha}i^{\alpha} +
\frac{(1+\alpha)\alpha}{2!}ji^\alpha + \frac{(1+\alpha)\alpha}{2!}j^{1+\alpha},
\]
from where, using $\sum_{i=1}^n i\psi_i(s) \leqs \|\psi_0\|,$ we obtain
\begin{eqnarray*}
\lefteqn{\sum_{i=1}^n \sum_{j=1}^i\bigl(j(i+1)^{1+\alpha}-ji^{1+\alpha}\bigr)V_{i,j}\psi_i(s)\psi_j(s) } \\
& \leqs & \!\!\frac{1}{2}(1+\alpha)\alpha\|\psi_0\|^2 + \frac{1}{2}(1+\alpha)(4+\alpha)\|\psi_0\|\sum_{i=1}^n i^{1+\alpha}\psi_i(s)\\
& \leqs & \!\!\|\psi_0\|^2 + 5\|\psi_0\|\sum_{i=1}^n i^{1+\alpha}\psi_i(s)
\end{eqnarray*}
which, upon substitution in \eqref{bound1}, gives
\begin{equation*}\label{bound2}
\sum_{i=1}^n i^{1+\alpha}\psi_i(t)  \leqs \sum_{i=1}^n i^{1+\alpha} \psi_{0\,i} +\|\psi_0\|^2T + 
\int_0^t 5\|\psi_0\|\sum_{i=1}^n i^{1+\alpha} \psi_i(s)ds.
\end{equation*}
Hence, by Gronwall's lemma, we conclude that, for all $t\in [0, T]$ and $n\geqs 1,$
\begin{align}
\sum_{i=1}^n i^{1+\alpha}\psi_i(t) & \leqs \Bigl(\sum_{i=1}^ni^{1+\alpha} \psi_{0\,i} +T\|\psi_0\|^2\Bigr)e^{5\|\psi_0\|t}\nonumber \\
& \leqs  \bigl(\mu_{1+\alpha}(\psi_0) +T\|\psi_0\|^2\bigr)e^{5\|\psi_0\|t}, \label{bound2}
\end{align}
where the inequality \eqref{bound2} is due to the assumption about the boundedness of the $(1+\alpha)$-moment of
the initial condition $\psi_0$.
Since the right-hand side \eqref{bound2} 
does not depend on $n$ we conclude that the same is valid in the limit $n\to \infty$, which proves the result.
\end{proof}

An important result regarding the evaluation of the higher moments of the solution is analyzed here.
\begin{theorem}\label{momenttheorem}
Assume $(g_i)$ be a real valued non-negative sequence such that $g_i=O(i^{\alpha+1})$. Let $\psi$ be a solution of \eqref{sd} when $V_{i,j} \leqs i^{\alpha}+j^{\alpha}$, $\alpha \in [0,1]$ under the assumption that $\mu_{\alpha+1} (\psi_0)$ is bounded on some interval $[0, T)$, for $0<T\leqs \infty$. Let $0\leqs t_1<t_2<T$. If the following hypotheses hold \\
$(H1)$ 
	\begin{equation}\label{h2}
		\int_{t_1}^{t_2} \sum_{i=1}^{\infty} \sum_{j=1}^{i} j(g_{i+1}-g_i) V_{i,j}\psi_j(s)\psi_i(s)ds <\infty
	\end{equation}

	$(H2)$ \begin{equation}\label{h3}
		\int_{t_1}^{t_2}\sum_{i=1}^{\infty} \sum_{j=1}^{i} g_j V_{i,j}\psi_i(s)\psi_j(s)ds <\infty
	\end{equation}  
then, for every $m\in \Nb,$ 
\begin{align}
\sum_{i=m}^\infty g_i\psi_i(t_2) - \sum_{i=m}^\infty g_i\psi_i(t_1) = & 
\int_{t_1}^{t_2}\sum_{i=m}^\infty\sum_{j=1}^i(jg_{i+1}-jg_i-g_j)V_{i,j}\psi_i(s)\psi_j(s)ds \nonumber \\
& + \delta_{m\geqs 2} \int_{t_1}^{t_2}\sum_{i=m}^\infty\sum_{j=1}^{m-1}g_{j}V_{i,j}\psi_i(s)\psi_j(s)ds \label{gmoment} \\
& + \delta_{m\geqs 2} \int_{t_1}^{t_2}g_m \psi_{m-1}(s)\sum_{j=1}^{m-1}jV_{m-1,j}\psi_j(s)ds \nonumber
\end{align}
where $\delta_P = 1$ if $P$ holds, and is equal to zero otherwise.
\end{theorem}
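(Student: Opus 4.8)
The plan is to derive \eqref{gmoment} by multiplying the mild formulation \eqref{solution}, written on the interval $[t_1,t_2]$, by $g_i$, summing the \emph{finite} block $m\leqs i\leqs n$, and then letting $n\to\infty$; hypotheses $(H1)$--$(H2)$ and the uniform $(\alpha+1)$-moment bound of Lemma~\ref{lemma4} are exactly what is needed to control the limit. For fixed $n>m$ the finite sum commutes freely with the time integral, and I would reorganise the integrand precisely as in Lemma~\ref{lemma0} and the derivation of \eqref{trunmom1}: shifting $i\mapsto i+1$ in the gain term and combining it with the first loss term produces $\sum_{j=1}^{i}j(g_{i+1}-g_i)V_{i,j}\psi_i\psi_j$ together with the boundary gain term $g_m\psi_{m-1}\sum_{j=1}^{m-1}jV_{m-1,j}\psi_j$ (only when $m\geqs 2$), while using the symmetry $V_{i,j}=V_{j,i}$ on the second loss term and splitting the region $\{j\geqs i\}$ yields the $-g_j$ contribution, the correction $\sum_{j=1}^{m-1}g_jV_{i,j}\psi_i\psi_j$ (again only for $m\geqs 2$), and two remainders. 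The finite-$n$ identity thus reads
\[
\sum_{i=m}^{n}g_i\bigl(\psi_i(t_2)-\psi_i(t_1)\bigr)=\int_{t_1}^{t_2}\bigl(F_n(s)+R_n(s)\bigr)\,ds,
\]
where $F_n$ collects the three target terms of \eqref{gmoment} truncated at $n$ and $R_n=-g_{n+1}\psi_n\sum_{j=1}^{n}jV_{n,j}\psi_j-\sum_{i=m}^{n}\sum_{j=n+1}^{\infty}g_iV_{i,j}\psi_i\psi_j$ gathers the growth-flux and tail remainders.

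Next I would pass to the limit in the main terms. The endpoint sums converge because $g_i=O(i^{1+\alpha})$ and Lemma~\ref{lemma4} makes $\sum_{i}g_i\psi_i(t_k)$ absolutely convergent. For the integrands, I would first verify that the two double series in \eqref{gmoment} converge absolutely pointwise in $s$, with a majorant of order $\mu_{1+\alpha}(\psi(s))\bigl(\|\psi_0\|+\mu_{1+\alpha}(\psi(s))\bigr)$; the point here is that the gain and growth-loss pieces \emph{must} be combined into the single difference $j(g_{i+1}-g_i)V_{i,j}$ \emph{before} estimating, since individually they are not summable, and the estimate then closes because $\alpha\leqs 1$ gives $i^{2\alpha}\leqs i^{1+\alpha}$. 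Hypotheses $(H1)$ and $(H2)$ assert precisely that these two series are integrable on $[t_1,t_2]$, so the dominated convergence theorem lets me replace $F_n$ by its limit inside the integral and send $n\to\infty$ in the main terms.

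The crux, and the step I expect to be the main obstacle, is the vanishing of the remainder $R_n$ as $n\to\infty$. The tail part $\sum_{i=m}^{n}\sum_{j>n}g_iV_{i,j}\psi_i\psi_j$ is the easy one: using $V_{i,j}\leqs i^\alpha+j^\alpha$, the bounds $\sum_{j>n}\psi_j\leqs(n+1)^{-(1+\alpha)}\mu_{1+\alpha}(\psi(s))$ and $\sum_{j>n}j^\alpha\psi_j\to0$, together with Lemma~\ref{lemma4} and dominated convergence, send its integral to $0$. The genuinely delicate term is the growth flux $g_{n+1}\psi_n\sum_{j=1}^{n}jV_{n,j}\psi_j$: since $g_{n+1}=O(n^{1+\alpha})$ and $V_{n,j}\leqs n^\alpha+j^\alpha$ contributes a further $n^\alpha$, the naive bound is of order $n^{1+2\alpha}\psi_n$, which the $(1+\alpha)$-moment bound alone does \emph{not} force to zero (one would need control of $\mu_{1+2\alpha}$). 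The plan is to tame it through $(H1)$: the flux through level $n$ is exactly a tail contribution of the telescoped series $\sum_i(g_{i+1}-g_i)\psi_i\sum_{j\leqs i}jV_{i,j}\psi_j$ whose time integral $(H1)$ declares finite, so that, combined with the uniform moment bound, the integrated flux $\int_{t_1}^{t_2}g_{n+1}\psi_n\sum_{j\leqs n}jV_{n,j}\psi_j\,ds$ is forced to tend to $0$ as $n\to\infty$. Establishing this last convergence rigorously is the heart of the proof; once it is in hand, letting $n\to\infty$ in the displayed identity yields \eqref{gmoment}. For the $\delta_{m\geqs 2}$ boundary terms there is also the clean alternative of first proving the case $m=1$ and then subtracting the exact, finite-sum identities satisfied by $\psi_1,\dots,\psi_{m-1}$.
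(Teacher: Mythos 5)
Your skeleton (truncate the mild form \eqref{solution} over $m\leqs i\leqs n$, reorganise the integrand as in Lemma~\ref{lemma0}, pass to the limit in the endpoint sums and main terms via the $(1+\alpha)$-moment bound of Lemma~\ref{lemma4}, $(H1)$--$(H2)$ and dominated convergence, and handle the tail remainder $\sum_{i=m}^n\sum_{j>n}g_iV_{i,j}\psi_i\psi_j$ by moment estimates) coincides with the paper's proof, and those steps are sound. The genuine gap is exactly where you flag it: the growth flux \eqref{gtmoment5}, $\int_{t_1}^{t_2} g_{n+1}\psi_n(s)\sum_{j=1}^n jV_{n,j}\psi_j(s)\,ds$. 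Your proposed mechanism --- that this flux ``is exactly a tail contribution of the telescoped series'' controlled by $(H1)$ --- is not correct: writing $g_{n+1}=g_1+\sum_{k=1}^n(g_{k+1}-g_k)$ produces terms $(g_{k+1}-g_k)\psi_n V_{n,j}$, whereas the $(H1)$ series has terms $(g_{k+1}-g_k)\psi_k V_{k,j}$ with the \emph{matching} index $k$ in all three factors; no telescoping identity converts one into the other. Even the weaker reading --- that integrability of the $(H1)$ series forces $n$ times its $n$th term to vanish, which is what the flux amounts to when $g_i=i^{1+\alpha}$ since then $g_{n+1}\sim\frac{n}{1+\alpha}(g_{n+1}-g_n)$ --- fails for the full sequence: $\sum_n b_n<\infty$ does not imply $nb_n\to 0$ (take $b_n$ supported on a sparse set of indices). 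A subsequential version ($\liminf_n nb_n=0$) can be salvaged, and since every other term in your finite-$n$ identity is already known to converge, vanishing of the flux along a subsequence would in fact suffice; but this rescue needs monotonicity-type assumptions on $g$ beyond $g_i=O(i^{\alpha+1})$, and you neither state nor carry it out --- you explicitly leave ``the heart of the proof'' open.

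The paper closes this step by a different device worth noting. It first proves the identity in the special case $g\equiv 1$ (where the two remainders vanish using $(H2)$ and the moment bound), obtaining the exact tail identity \eqref{b}. Setting $m=n+1$ in \eqref{b} then expresses the flux integral \emph{exactly} as $\sum_{i=n+1}^\infty\bigl(\psi_i(t_2)-\psi_i(t_1)\bigr)$ plus a double-tail integral over the region $\{n+1\leqs j\leqs i\}$. Multiplying by $g_{n+1}$, the first piece vanishes because $|g_{n+1}|\sum_{i>n}\psi_i(t_p)\leqs C\sum_{i>n}i^{\alpha+1}\psi_i(t_p)\to 0$ by Lemma~\ref{lemma4} (this is \eqref{c}), and the second is absorbed by an $(H2)$-type convergent integral since $g_{n+1}\leqs Cj^{\alpha+1}$ there; this yields \eqref{d} for the whole sequence, with no structural hypotheses on $g$ beyond those in the statement. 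Until you supply an argument of this kind, or make the subsequence rescue precise under explicit extra assumptions on $g$, your proof is incomplete at its decisive step.
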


\begin{proof}
Take positive integers $m<n$. Multiplying each equation in \eqref{solution} by $g_i$ and summing over $i$ from $m$ to $n$, we obtain 
\begin{align}
\sum_{i=m}^n g_i\psi_i(t_2) - \sum_{i=m}^n g_i\psi_i(t_1) = & 
\int_{t_1}^{t_2}\sum_{i=m}^n \sum_{j=1}^i(jg_{i+1}-jg_i-g_j)V_{i,j}\psi_i(s)\psi_j(s)ds \label{gtmoment1}  \\
& + \delta_{m\geqs 2} \int_{t_1}^{t_2}\sum_{i=m}^n\sum_{j=1}^{m-1}g_{j}V_{i,j}\psi_i(s)\psi_j(s)ds \label{gtmoment2} \\
& + \delta_{m\geqs 2} \int_{t_1}^{t_2}g_m \psi_{m-1}(s)\sum_{j=1}^{m-1}jV_{m-1,j}\psi_j(s)ds \label{gtmoment3}\\
& - \int_{t_1}^{t_2}\sum_{i=m}^n\sum_{j=n+1}^{\infty}g_{i}V_{i,j}\psi_i(s)\psi_j(s)ds \label{gtmoment4}  \\
& - \int_{t_1}^{t_2}g_{n+1}\psi_{n}(s)\sum_{j=1}^{n}jV_{n,j}\psi_j(s)ds. \label{gtmoment5}
\end{align}
We need to prove that, as $n\to\infty$, the integrals in \eqref{gtmoment4} and \eqref{gtmoment5} converge to zero,
and the other integrals converge to the corresponding ones in the right-hand side of \eqref{gmoment}.

\medskip
 
Using $(H2)$ by interchanging the order of summation and replacing $i$ for $j$ and then following $(a)$ in Definition \ref{defsol}, one can obtain
\begin{equation}\label{g}
 \lim_{n \to \infty} \int_{t_1}^{t_2} \sum_{i=m}^{n} \sum_{j=n+1}^{\infty} g_iV_{i,j}\psi_i(s)\psi_j(s)ds=0
 \end{equation}
which proves the convergence of (\ref{gtmoment4}) to zero.
With $g_i=1$ we can write (\ref{gtmoment1})-(\ref{gtmoment5}) as follows
\begin{align*}
\sum_{i=m}^n \psi_i(t_2) - \sum_{i=m}^n \psi_i(t_1) = & 
\int_{t_1}^{t_2}\sum_{i=m}^n \sum_{j=1}^i(-V_{i,j}\psi_i(s)\psi_j(s))ds 
+ \delta_{m\geqs 2} \int_{t_1}^{t_2}\sum_{i=m}^n\sum_{j=1}^{m-1}V_{i,j}\psi_i(s)\psi_j(s)ds  \\
& + \delta_{m\geqs 2} \int_{t_1}^{t_2} \psi_{m-1}(s)\sum_{j=1}^{m-1}jV_{m-1,j}\psi_j(s)ds
 - \int_{t_1}^{t_2}\sum_{i=m}^n\sum_{j=n+1}^{\infty} V_{i,j}\psi_i(s)\psi_j(s)ds  \\
& - \int_{t_1}^{t_2} \psi_{n}(s)\sum_{j=1}^{n}jV_{n,j}\psi_j(s)ds. 
\end{align*}
Further, thanks to relation $(H2)$ and the fact that $g_j=O(j^{\alpha+1})$, the last two integrals in the above expression tend to zero as $n\rightarrow \infty$. Therefore,
\begin{align}\label{b}
\nonumber
\sum_{i=m}^\infty \psi_i(t_2) - \sum_{i=m}^\infty \psi_i(t_1) = & 
\int_{t_1}^{t_2}\sum_{i=m}^\infty \sum_{j=1}^i(-V_{i,j})\psi_i(s)\psi_j(s)ds 
+ \delta_{m\geqs 2} \int_{t_1}^{t_2}\sum_{i=m}^\infty \sum_{j=1}^{m-1}V_{i,j}\psi_i(s)\psi_j(s)ds  \\
& + \delta_{m\geqs 2} \int_{t_1}^{t_2} \psi_{m-1}(s)\sum_{j=1}^{m-1}jV_{m-1,j}\psi_j(s)ds.
\end{align}
For $p=1,2$, consider,
\begin{align}
\nonumber
|g_{n+1}| \sum_{i=n+1}^{\infty} \psi_i(t_p) & \leqs C(n+1)^{\alpha+1} \sum_{i=n+1}^{\infty} \psi_i(t_p) \\
\nonumber
& \leqs C \sum_{i=n+1}^{\infty} i^{\alpha+1} \psi_i(t_p)
\end{align} 
for $C \in \Rb^{+}$ and thus Lemma \ref{lemma4} guarantees that 
\begin{equation}\label{c}
\lim_{n \to \infty} |g_{n+1}| \sum_{i=n+1}^{\infty} \psi_i(t_p) =0.
\end{equation}
Replacing $m$ by $n+1$ in (\ref{b}), multiplying both sides by $g_{n+1}$, letting $n \to \infty$, and using $(H2)$ together with (\ref{c}) confirms that  
\begin{equation}\label{d}
 \int_{t_1}^{t_2}g_{n+1} \psi_{n}(s)\sum_{j=1}^{n}jV_{n,j}\psi_j(s)ds \to 0.
\end{equation}
By Definition~\ref{defsol}, the boundedness of $\psi_i(t)$, $(H1)$ and $(H2)$, we conclude that
\begin{equation}\label{e}
\int_{t_1}^{t_2}\sum_{i=m}^n \sum_{j=1}^i(jg_{i+1}-jg_i-g_j)V_{i,j}\psi_i(s)\psi_j(s)ds \to \int_{t_1}^{t_2}\sum_{i=m}^{\infty} \sum_{j=1}^i(jg_{i+1}-jg_i-g_j)V_{i,j}\psi_i(s)\psi_j(s)ds
\end{equation}
and 
\begin{equation}\label{f}
\delta_{m\geqs 2} \int_{t_1}^{t_2}\sum_{i=m}^n\sum_{j=1}^{m-1}g_{j}V_{i,j}\psi_i(s)\psi_j(s)ds \to \delta_{m\geqs 2} \int_{t_1}^{t_2}\sum_{i=m}^{\infty} \sum_{j=1}^{m-1}g_{j}V_{i,j}\psi_i(s)\psi_j(s)ds.
\end{equation}
Thus, using Definition~\ref{defsol} together with equations (\ref{g}), (\ref{d})-(\ref{f}) and the bounded convergence theorem, the result follows.
\end{proof}

Finally, the following proposition is discussed which is essential in showing that the solution of the S-D model is first-order differentiable. 
\begin{prop}\label{ac}
	Let $\{V_{i,j}\}_{i,j \in \Nb}$ be non-negative and $V_{i,j}\leqs i^{\alpha}+j^{\alpha}$, $0 \leqs \alpha \leqs 1$. 
Let $\psi=(\psi_i)$ be a solution
 on some interval $[0,T[$, where $0 <T \leqs \infty$, of the equation (\ref{sd}) with initial condition $f_0$ and having bounded $\mu_{\alpha+1} (\psi_0)$. Then, the series $\sum_{j=1}^{i} jV_{i,j}\psi_j(t)\psi_i(t)$ and $\sum_{j=i}^{\infty} V_{i,j}\psi_j(t)\psi_i(t)$ are absolutely continuous on the compact sub-intervals of $[0,T[$.
\end{prop}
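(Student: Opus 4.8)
The plan is to reduce both assertions to two soft facts: (i) for each fixed index $i$ the coordinate $\psi_i$ is absolutely continuous on every compact subinterval of $[0,T[$, and (ii) a finite product of bounded absolutely continuous functions is again absolutely continuous. Fact (i) is read off the integral equation \eqref{solution}: the two finite sums there are bounded because each $\psi_k$ is continuous and the a priori bound $\|\psi(t)\|\leqs\|\psi_0\|$ gives $\psi_k(t)\leqs\|\psi_0\|/k$, while the tail term $\psi_i\sum_{j=i}^\infty V_{i,j}\psi_j$ is dominated by $(\|\psi_0\|/i)\sum_{j=1}^\infty V_{i,j}\psi_j$, which is integrable on $[0,b]\subset[0,T[$ by Definition \ref{defsol}(b). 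Hence the integrand in \eqref{solution} belongs to $L^1([0,b])$, so $\psi_i$ is an indefinite integral and therefore absolutely continuous. Fact (ii) is the standard observation that if $f,g$ are absolutely continuous on a compact interval then they are bounded there, $(fg)'=f'g+fg'\in L^1$, and $fg$ is again an indefinite integral.

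Given (i) and (ii), the first series is immediate: $\sum_{j=1}^{i} jV_{i,j}\psi_j(t)\psi_i(t)$ is a \emph{finite} sum of products of the bounded absolutely continuous functions $\psi_1,\dots,\psi_i$, hence absolutely continuous.

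For the tail series I would freeze $i$ and study $h_i(t):=\sum_{j=i}^\infty V_{i,j}\psi_j(t)$ by applying Theorem \ref{momenttheorem} with $m:=i$ and the sequence $g_k:=V_{i,k}$; since $V_{i,k}\leqs i^\alpha+k^\alpha=O(k^{\alpha+1})$ for fixed $i$, the growth requirement $g_k=O(k^{\alpha+1})$ holds. The identity \eqref{gmoment} then gives $h_i(t_2)-h_i(t_1)=\int_{t_1}^{t_2}G(s)\,ds$ for a single fixed integrand $G$, and taking $t_1$ to be the left endpoint of a given compact subinterval exhibits $h_i$ as an indefinite integral of $G\in L^1$, hence absolutely continuous; combining with (i)--(ii), $\psi_i\,h_i$ is absolutely continuous, which is the second assertion. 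The real work is verifying hypotheses $(H1)$--$(H2)$ of Theorem \ref{momenttheorem} for the choice $g_k=V_{i,k}$, and this is where I expect the main obstacle. Writing the theorem's running index as $p$, hypothesis $(H2)$ asks for integrability of $\sum_{p=1}^\infty\sum_{j=1}^p V_{i,j}V_{p,j}\psi_p\psi_j$; bounding $V_{i,j}V_{p,j}\leqs(i^\alpha+j^\alpha)(p^\alpha+j^\alpha)$ and using $j\leqs p$ to replace $j^{2\alpha}$ by $p^\alpha j^\alpha$, every resulting double sum factors into products of the moments $\mu_\alpha(\psi)\leqs\|\psi_0\|$ and $\mu_0(\psi)\leqs\|\psi_0\|$, giving a bounded, hence integrable, integrand. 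For $(H1)$ the new ingredient is the increment $g_{p+1}-g_p=V_{i,p+1}-V_{i,p}$, whose modulus is at most $2i^\alpha+(p+1)^\alpha+p^\alpha\leqs C_i(1+p^\alpha)$; after expansion the dominant contribution is $\sum_{p}p^{2\alpha}\psi_p\sum_{j\leqs p}j\psi_j$, and here the decisive point is that $\alpha\leqs 1$ forces $2\alpha\leqs 1+\alpha$, so $p^{2\alpha}\leqs p^{1+\alpha}$ and the sum is controlled by $\|\psi_0\|\,\mu_{1+\alpha}(\psi)$, which is finite on $[0,b]$ by Lemma \ref{lemma4}; the remaining terms are of the same or lower order and are bounded by products of $\mu_0,\mu_\alpha,\mu_{1+\alpha}$. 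Thus $(H1)$--$(H2)$ hold, $G\in L^1$, and the proof closes. I anticipate that the only delicate matter is this moment bookkeeping --- in particular the repeated use of $2\alpha\leqs1+\alpha$ to keep every exponent within the available $(1+\alpha)$-moment bound of Lemma \ref{lemma4}; the absolute-continuity conclusions themselves are then purely formal.
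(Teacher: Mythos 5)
Your proposal is correct, and it reaches the conclusion by a noticeably different instantiation of the same machinery. The paper also funnels everything through Theorem \ref{momenttheorem}, but with a single universal weight $g_k=O(k^{\alpha+1})$ (with $g_{k+1}-g_k=O(k^\alpha)$, effectively $g_k=k^{1+\alpha}$) and $m=1$: after checking $(H1)$--$(H2)$ by exactly the moment bookkeeping you anticipate (Lemma \ref{lemma4} plus $2\alpha\leqs 1+\alpha$ and the a priori mass bound), it deduces uniform convergence of the dominating series $\sum_k g_k\psi_k(t)$ and then argues that both series in the statement, being dominated by it, are uniformly convergent, with boundedness of $\psi_i$ closing the absolute-continuity claim; notably, the paper treats even the finite sum $\sum_{j=1}^i jV_{i,j}\psi_j$ through this domination, whereas you correctly observe it is a finite sum of products of bounded absolutely continuous coordinates (your fact (i) from \eqref{solution}, which the paper never isolates explicitly) and dispatch it in one line. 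For the tail, your choice $m=i$, $g_k=V_{i,k}$ is the genuinely different move: it makes the tail sum $h_i$ \emph{itself} the left-hand side of \eqref{gmoment}, exhibiting it directly as an indefinite integral, so the absolute-continuity conclusion is immediate and airtight, at the price of re-verifying $(H1)$ for the possibly sign-changing increments $V_{i,p+1}-V_{i,p}$ (which your triangle-inequality bound $|g_{p+1}-g_p|\leqs C_i(1+p^\alpha)$ handles, and which is harmless provided one reads $(H1)$, as intended, as absolute integrability). What each route buys: the paper's single application is uniform in $i$ and yields one dominating absolutely continuous majorant for all the series at once, but its final step --- passing from "dominated by a uniformly convergent series with absolutely continuous sum" to "absolutely continuous" --- is left implicit; your per-$i$ application costs one application of Theorem \ref{momenttheorem} for each $i$ but gives an exact integral representation that makes the last step purely formal, exactly as you predicted.
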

\begin{proof}
Let, $(g_i)$ satisfy the conditions in the statement of Theorem \ref{momenttheorem}. For $(H1)$ to hold, proving the boundedness of the series $\sum_{i=1}^{\infty} \sum_{j=1}^{i} j(g_{i+1}-g_i) V_{i,j}\psi_i(t)\psi_j(t)$ is enough. Using the fact that $g_{i+1}-g_i=O(i^\alpha)$, we have the following
\begin{align*}
\sum_{i=1}^{\infty} \sum_{j=1}^{i} j(g_{i+1}-g_i) V_{i,j}\psi_i\psi_j & \leqs \sum_{i=1}^{\infty} \sum_{j=1}^{i} C j i^{\alpha} V_{i,j}\psi_i\psi_j\\
	&\leqs \sum_{i=1}^{\infty} \sum_{j=1}^{i} C j i^{\alpha} (i^{\alpha}+j^{\alpha})\psi_i\psi_j 
	&\leqs \sum_{i=1}^{\infty} \sum_{j=1}^{i} C (ji^{\alpha+1}+i^{\alpha}j^{\alpha+1})\psi_i\psi_j
	\end{align*}	
for $C$ being some positive constant. Hence, by Lemma \ref{lemma4} and Section $4$, one can obtain
		\begin{equation}\label{h2proof}
		\sum_{i=1}^{\infty} \sum_{j=1}^{i} j(g_{i+1}-g_i) V_{i,j}\psi_i\psi_j \leqs 2C N_{\alpha+1}\mu_1(0) .
	\end{equation}
Thus, $(H1)$ holds true. Further, to establish the relation $(H2)$, consider the expression,
\begin{align*}
	\sum_{i=1}^{\infty} \sum_{j=1}^{i} g_{j}V_{i,j}\psi_i(s)\psi_j(s)& \leqs  \sum_{i=1}^{\infty} \sum_{j=1}^{i} C[j^{2\alpha+1}+i^{\alpha}j^{\alpha+1}]\psi_i \psi_j\\
	&  \leqs 2C N_{\alpha+1}\mu_1(0)
\end{align*} 	
which is finite by Lemma \ref{lemma4}. Therefore, all the hypothesis of Theorem \ref{momenttheorem} are satisfied for any $t_1, t_2\in [0,T)$. Hence, considering $m=1$ for $t \in [0,T)$, equation (\ref{gmoment}) implies the uniform convergence of the series 
$\sum_{i=1}^{\infty}g_i\psi_i(t)$. Since, the series $\sum_{j=1}^{i} jV_{i,j}\psi_j(t)$ is bounded by this series, as $jV_{i,j} =O(i^{\alpha+1})$ when $j<i$, we conclude the uniform convergence of $\sum_{j=1}^{i} jV_{i,j}\psi_j(t)$.
Now, the boundedness of $\psi_i(t)$ ensures the absolute continuity of $\sum_{j=1}^{i} jV_{i,j}\psi_j(t)\psi_i(t)$. 
Also, the series $\sum_{j=i}^{\infty} V_{i,j} \psi_j(t)$ is bounded by $\sum_{j=1}^{\infty} g_j \psi_j(t)$, which yields its uniform convergence. Finally, the boundedness of $\psi_i(t)$ gives the desired result.
\end{proof}
The Definition \ref{defsol} (a), hypotheses $(H1)-(H2)$ of Theorem \ref{momenttheorem} and Proposition \ref{ac} ensure that the solution $f$ is differentiable in the classical sense in $[0,T[$.

\section{Uniqueness}
	In this section, we discuss the uniqueness of solutions for the S-D model for a restrictive class of kernels. It should be mentioned here that it was not easy to deal with $V_{i,j} \leq C_V (i+j)$ $\forall i,j$, so the kernel was restricted to establish uniqueness. 
\begin{theorem}\label{uniqueness}
	Let, the kernel $V_{i,j} \leq C_V (i+j)$ and $V_{i,j} \leq C_V \,\min\{i^{\eta},j^{\eta}\}, 0 \leq \eta \leq 2$ valid $\forall i,j \in \Nb$, $C_V \in \mathbb{R}^{+}$. If the Lemma \ref{lemma4} holds, then the equations \eqref{sd}-\eqref{sdic} have a unique solution in $X^{+}$. 
\end{theorem}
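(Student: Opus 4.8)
The plan is to run the classical weighted-$\ell^1$ stability argument: assume $\psi=(\psi_i)$ and $\phi=(\phi_i)$ are two solutions in $X^+$ on $[0,T]$ sharing the initial datum $\psi_0$, and show their difference vanishes in the norm of $X$. The standing hypotheses supply two \emph{a priori} bounds to be used repeatedly: the mass bound $\|\psi(t)\|,\|\phi(t)\|\leqs\|\psi_0\|$ from \eqref{solutionbound}, and --- crucially --- the uniform boundedness on $[0,T]$ of the second moments $\mu_2(\psi(t))$ and $\mu_2(\phi(t))$. The latter is exactly what the clause ``if Lemma~\ref{lemma4} holds'' provides, via Lemma~\ref{lemma4} with $\alpha=1$ together with $V_{i,j}\leqs C_V(i+j)$; I write $\mu_2^\ast:=\sup_{t\in[0,T]}\max\{\mu_2(\psi(t)),\mu_2(\phi(t))\}<\infty$.

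Set $x_i:=\psi_i-\phi_i$ and $U(t):=\sum_{i=1}^\infty i\,|x_i(t)|$, so that $U(0)=0$ and $U(t)\leqs\|\psi(t)\|+\|\phi(t)\|\leqs 2\|\psi_0\|$. I would differentiate $U$ using $\tfrac{d}{dt}|x_i|=\sgn(x_i)\dot x_i$ a.e., substitute the equation for each $\dot x_i$, and expand every bilinear gain/loss difference through $\psi_{i-1}\psi_j-\phi_{i-1}\phi_j=x_{i-1}\psi_j+\phi_{i-1}x_j$ and $\psi_i\psi_j-\phi_i\phi_j=x_i\psi_j+\phi_i x_j$. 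The decisive algebraic simplification is that the gain term (the $(i-1)\to i$ transition) and the near-loss term (the $i\to i+1$ transition) telescope: after reindexing $k=i-1$ the two contributions combine into $\sum_{k\geqs1}\sum_{j=1}^k jV_{k,j}\bigl[(k+1)\sgn(x_{k+1})-k\sgn(x_k)\bigr](x_k\psi_j+\phi_k x_j)$. For the part carrying $x_k$ on the reacting cluster one uses $x_k\bigl[(k+1)\sgn(x_{k+1})-k\sgn(x_k)\bigr]\leqs(k+1)|x_k|-k|x_k|=|x_k|$, and $V_{k,j}\leqs C_V(k+j)\leqs 2C_Vk$ for $j\leqs k$, so that this block is bounded by $2C_V\|\psi_0\|\,U(t)$. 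The far-loss term produces, on its $x$-on-cluster part, $-\sum_i i\,|x_i|\sum_{j\geqs i}V_{i,j}\psi_j\leqs0$, which has a favourable sign and may be discarded; its $x$-on-partner companion $-\sum_i i\,\sgn(x_i)\,\phi_i\sum_{j\geqs i}V_{i,j}x_j$ is controlled using $V_{i,j}\leqs C_V(i+j)\leqs 2C_Vj$ for $j\geqs i$ and the mass bound, giving $\leqs 2C_V\|\psi_0\|\,U(t)$.

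The one genuinely delicate contribution is the $x$-on-partner part of the telescoped cascade, namely $\sum_k\sum_{j\leqs k}jV_{k,j}\phi_k\bigl[(k+1)\sgn(x_{k+1})-k\sgn(x_k)\bigr]x_j$: here the sign weight carries an uncompensated extra factor of order $k$, and after bounding $|(k+1)\sgn(x_{k+1})-k\sgn(x_k)|\leqs 2k+1$ and interchanging the order of summation one is left with $\sum_j j|x_j|\sum_{k\geqs j}k^2\phi_k\leqs\mu_2^\ast\,U(t)$ (up to the constant $6C_V$). This is precisely the step that forces the second-moment hypothesis, and it is also where the bound $V_{i,j}\leqs C_V\min\{i^\eta,j^\eta\}$, $0\leqs\eta\leqs2$, enters as an alternative kernel class, localising the kernel on the smaller index when the additive bound is unavailable. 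Collecting all blocks yields $\tfrac{d}{dt}U(t)\leqs C\,U(t)$ for a.e.\ $t\in[0,T]$, with $C=C(C_V,\|\psi_0\|,\mu_2^\ast)$ finite; since $U(0)=0$, Gronwall's lemma forces $U\equiv0$, i.e.\ $\psi=\phi$ on $[0,T]$, and as $T<\infty$ was arbitrary the solution is unique. I expect the main obstacle to be not the estimates themselves but their rigorous justification: legitimising the termwise time-differentiation of the infinite sum $\sum_i i|x_i|$ with the discontinuous sign weights, together with the interchanges of summation and integration. This is where the absolute continuity of the relevant series from Proposition~\ref{ac}, the moment identity of Theorem~\ref{momenttheorem}, and the moment bound of Lemma~\ref{lemma4} must be invoked to dominate the tails uniformly and pass to the limit.
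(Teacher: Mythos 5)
Your proposal is correct and reaches the same Gronwall endgame as the paper (difference of solutions, sign factors, the splitting $\psi_i\psi_j-\phi_i\phi_j=x_i\psi_j+\phi_i x_j$, second moment from Lemma~\ref{lemma4} with $\alpha=1$), but the implementation is genuinely different in two respects. First, the paper works with the \emph{unweighted} distance $u(t)=\sum_i|\psi_i-\rho_i|$ rather than your weighted $U(t)=\sum_i i\,|x_i|$: it bounds the gain term crudely by absolute values, cancels the resulting $|u_i|\rho_j$ pieces exactly against those of the near-loss term, drops the sign-favourable far-loss piece, and is left with $2\sum_i\sum_{j\leqs i}jV_{i,j}|u_j|\psi_i+\sum_i\sum_{j\geqs i}V_{i,j}|u_j|\psi_i$, both absorbed into $\mu_2\,u$; your telescoped weight $(k+1)\sgn(x_{k+1})-k\sgn(x_k)$ with the inequality $x_k\bigl[(k+1)\sgn(x_{k+1})-k\sgn(x_k)\bigr]\leqs|x_k|$ plays the role of that cancellation. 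Second --- and this is the substantive payoff of your choice of norm --- in the unweighted setting the far-loss term genuinely requires the hypothesis $V_{i,j}\leqs C_V\min\{i^{\eta},j^{\eta}\}\leqs C_V i^2$ for $j\geqs i$, because $\sum_j j|u_j|$ is not controlled by $u$ and all growth must be pushed onto the index $i$ where $\mu_2(\psi)$ absorbs it; in your weighted norm the additive bound $V_{i,j}\leqs 2C_V j$ is absorbed directly into $U$, so the $\min$-kernel hypothesis never enters your argument at all. Your description of it as an ``alternative'' localisation is therefore the one inexact spot: in your route it is redundant (so your version proves the theorem under the additive bound alone, marginally stronger than stated), whereas in the paper's route it is essential. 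Your individual estimates check out ($2C_V\|\psi_0\|\,U$ for the $x$-on-cluster cascade and the far-loss companion blocks, $6C_V\mu_2^\ast\,U$ for the $x$-on-partner cascade block), and the rigor caveat you flag --- legitimising termwise differentiation of the infinite sum with discontinuous sign weights and the interchanges of summation and integration --- is real, but the paper's own proof glosses over exactly the same point, differentiating and summing termwise without further justification.
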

\begin{proof}
	We shall use an apporach that revolves around defining a function (say $u(t)$) that is difference of two solutions of the equation \eqref{sd} (let $\psi_i$ and $\rho_i$), both satisfying the initial condition \eqref{sdic}. Furthermore, it makes use of the properties of the signum function such as
	\begin{itemize}
		\item $(P1)$ $\sgn(\mathcal{C}(t))\frac{d \mathcal{C}(t)}{dt}= \frac{d|\mathcal{C}(t)|}{dt}$,
		\\
		\item $(P2)$ $\sgn(a) \sgn(b)=\sgn(ab)$ and $|a|=a \sgn(a)$ for any real numbers $a,b$.
	\end{itemize}
	 Our goal here is to use Gronwall's lemma to the function $u(t)$ to reach at the required result.
	Define 
	\begin{equation}\label{u(t)}
	u(t):=\sum_{i=1}^{\infty} \big|\psi_i(t)-\rho_i(t)\big|=\sum_{i=1}^{\infty}|u_i(t)|.
	\end{equation}	
Using the expression of the equation \eqref{sd}, we obtain
\begin{align*} 
\frac{d u_i(t)}{dt}&=\bigg(\delta_{i\geq 2}\psi_{i-1}(t) \sum_{j=1}^{i-1}j V_{i-1,j} \psi_j(t) - \psi_i(t) \sum_{j=1}^{i} jV_{i,j}  \psi_j(t) \sum_{j=i}^{\infty} V_{i,j} \psi_i(t) \psi_j(t)\bigg)\\
&-\bigg(\delta_{i\geq 2} \rho_{i-1}(t) \sum_{j=1}^{i-1} jV_{i-1,j} \rho_j(t) - \rho_i(t) \sum_{j=1}^{i} jV_{i,j}  \rho_j(t) - \sum_{j=i}^{\infty} V_{i,j} \rho_i(t) \rho_j(t)\bigg).
\end{align*} 
 Multiplying both sides by $\sgn(u_i(t))$ and then using $(P1)$, the above equation reduces to 
\begin{align*} 
\frac{d}{dt}|u_i(t)|&=\sgn(u_i(t))\bigg(\delta_{i\geq 2}\psi_{i-1}(t) \sum_{j=1}^{i-1}j V_{i-1,j} \psi_j(t) - \psi_i(t) \sum_{j=1}^{i} jV_{i,j}  \psi_j(t) - \sum_{j=i}^{\infty} V_{i,j} \psi_i(t) \psi_j(t)\bigg)\\
&-\sgn(u_i(t))\bigg(\delta_{i\geq 2} \rho_{i-1}(t) \sum_{j=1}^{i-1} jV_{i-1,j} \rho_j(t) - \rho_i(t) \sum_{j=1}^{i} jV_{i,j}  \rho_j(t) - \sum_{j=i}^{\infty} V_{i,j} \rho_i(t) \rho_j(t)\bigg).
\end{align*} 
Further, integrating both sides, using $u(0)=0$ and summing over $i$ from $1$ to $\infty$ yield
\begin{align*} 
u(t)= \int_{0}^{t}\sum_{i=1}^{\infty} \sgn(u_i(h) \bigg(\delta_{i\geq 2}\psi_{i-1}(h) \sum_{j=1}^{i-1}j V_{i-1,j} \psi_j(h) - \psi_i(h) \sum_{j=1}^{i} jV_{i,j}  \psi_j(h) - \sum_{j=i}^{\infty} V_{i,j} \psi_i(h) \psi_j(h)\bigg) dh\\
-\int_{0}^{t} \sum_{i=1}^{\infty} \sgn(u_i(h) \bigg(\delta_{i\geq 2}\rho_{i-1}(h) \sum_{j=1}^{i-1} jV_{i-1,j} \rho_j(h) - \rho_i(h) \sum_{j=1}^{i} jV_{i,j}  \rho_j(h) - \sum_{j=i}^{\infty} V_{i,j} \rho_i(h) \rho_j(h)\bigg) dh.
\end{align*} 
Replacing $i-1$ by $i'$, then changing $i'$ by $i$ in the first and fourth sums and finally using 
$$(\psi_i\, \psi_j-\rho_i\, \rho_j)(t)=(\psi_i\,u_j+\rho_j\, u_i)(t),$$
simplify to
\begin{align*} 
u(t)&= \int_{0}^{t} \bigg( \sum_{i=1}^{\infty}\bigg( \sgn(u_{i+1})\big( \delta_{i\geq 1} \sum_{j=1}^{i}j V_{i,j} (\psi_{i}u_j+\rho_j u_i)\big) - \sgn(u_{i})\big(\sum_{j=1}^{i} jV_{i,j} 
(\psi_i\,u_j+\rho_j\, u_i)\big)\\
&\,\,\,-\sgn(u_{i}) \big(\sum_{j=i}^{\infty} V_{i,j} (\psi_i\,u_j+\rho_j\, u_i)\big)\bigg)\bigg) (h) dh.
\end{align*}
Using $(P1)$ and $(P2)$, one can obtain the following expression
\begin{align*} 
u(t)& \leq \int_{0}^{t}\bigg( \bigg(\sum_{i=1}^{\infty} \sum_{j=1}^{i}j V_{i,j} (|u_j|\, \psi_i+|u_i|\, \rho_j)\bigg)- \bigg(\sum_{i=1}^{\infty} \sum_{j=1}^{i} j V_{i,j} \big(\sgn(u_i)\, u_j\, \psi_i+|u_i|\, \rho_j\big)\bigg)\\
&\,\,\ -\bigg(\sum_{i=1}^{\infty} \sum_{j=i}^{\infty} V_{i,j} \big(\sgn(u_i)\, u_j\, \psi_i+|u_i|\, \rho_j\big)\bigg)\bigg)(h) dh.
\end{align*}
Now, cancelling second and fourth expressions and estimating the summations will give us 
\begin{align*} 
u(t) &\,\,\, \leq \int_{0}^{t}\bigg(2\sum_{i=1}^{\infty} \sum_{j=1}^{i}j V_{i,j} |u_j|\,\psi_i + \sum_{i=1}^{\infty} \sum_{j=i}^{\infty} V_{i,j}  |u_j|\, \psi_i \bigg)(h) dh.
\end{align*}
Finally, inserting the value of $V_{i,j}$, we write 
\begin{align*} 
u(t) & \leq \int_{0}^{t}\bigg(2\sum_{i=1}^{\infty} \sum_{j=1}^{i}C_{V} j (i+j) |u_j|\,\psi_i + \sum_{i=1}^{\infty} \sum_{j=i}^{\infty} C_V i^{\eta} |u_j|\, \psi_i \bigg)(h) dh.\\
\end{align*}
The application of Lemma \ref{lemma4} for $\alpha=1$ leads to 
\begin{equation*}
u(t) \leq \int_{0}^{t} (4 C_V \mu_2 + C_V \mu_2) u(h) dh.
\end{equation*}
The application of Gronwall's lemma enables us to have $u(t) \equiv 0$ which implies that $\psi_i(t) = \rho_i(t)$ $\forall$ $0 \leq t \leq T.$ Since, $T$ is arbitrary, we get uniqueness of $\psi_i(t)$.
\end{proof}

\section{Acknowledgement}
RKumar wishes to thank Science and Engineering Research Board (SERB), Department of Science and Technology (DST), India, for the funding through the project SRG/2019/001490. The research of FPdC was partially supported by Funda\c{c}\~ao para a Ci\^encia e a Tecnologia (Portugal) through project CAMGSD UID/04459/2020. The authors claim no conflict of interest.

\bibliographystyle{IEEEtran}
\bibliography{sonali}

\begin{thebibliography}{10}
\providecommand{\url}[1]{#1}
\csname url@samestyle\endcsname
\providecommand{\newblock}{\relax}
\providecommand{\bibinfo}[2]{#2}
\providecommand{\BIBentrySTDinterwordspacing}{\spaceskip=0pt\relax}
\providecommand{\BIBentryALTinterwordstretchfactor}{4}
\providecommand{\BIBentryALTinterwordspacing}{\spaceskip=\fontdimen2\font plus
\BIBentryALTinterwordstretchfactor\fontdimen3\font minus
  \fontdimen4\font\relax}
\providecommand{\BIBforeignlanguage}[2]{{%
\expandafter\ifx\csname l@#1\endcsname\relax
\typeout{** WARNING: IEEEtran.bst: No hyphenation pattern has been}%
\typeout{** loaded for the language `#1'. Using the pattern for}%
\typeout{** the default language instead.}%
\else
\language=\csname l@#1\endcsname
\fi
#2}}
\providecommand{\BIBdecl}{\relax}
\BIBdecl

\bibitem{piotrowski1953collisions}
S.~Piotrowski, ``The collisions of asteroids,'' \emph{AcA}, vol.~6, pp.
  115--138, 1953.

\bibitem{brilliantov2015size}
N.~Brilliantov, P.~Krapivsky, A.~Bodrova, F.~Spahn, H.~Hayakawa, V.~Stadnichuk,
  and J.~Schmidt, ``Size distribution of particles in saturn’s rings from
  aggregation and fragmentation,'' \emph{Proceedings of the National Academy of
  Sciences}, vol. 112, no.~31, pp. 9536--9541, 2015.

\bibitem{dubovski1999atriangle}
P.~B. Dubovski{\u\i}, ``A 'triangle' of interconnected coagulation models,''
  \emph{Journal of Physics A: Mathematical and General}, vol.~32, no.~5, p.
  781, 1999.

\bibitem{dubovski1999structural}
P.~Dubovski, ``Structural stability of disperse systems and finite nature of a
  coagulation front,'' \emph{Journal of Experimental and Theoretical Physics},
  vol.~89, no.~2, pp. 384--390, 1999.

\bibitem{perelson1984kinetics}
A.~S. Perelson and R.~W. Samsel, ``Kinetics of red blood cell aggregation: an
  example of geometric polymerization,'' in \emph{Kinetics of Aggregation and
  Gelation}.\hskip 1em plus 0.5em minus 0.4em\relax Elsevier, 1984, pp.
  137--144.

\bibitem{bonilla2006kinetics}
L.~Bonilla, A.~Carpio, J.~Neu, and W.~Wolfer, ``Kinetics of helium bubble
  formation in nuclear materials,'' \emph{Physica D: Nonlinear Phenomena}, vol.
  222, no. 1-2, pp. 131--140, 2006.

\bibitem{aizenman1979convergence}
M.~Aizenman and T.~A. Bak, ``Convergence to equilibrium in a system of reacting
  polymers,'' \emph{Communications in Mathematical Physics}, vol.~65, no.~3,
  pp. 203--230, 1979.

\bibitem{davidson2014existence}
J.~Davidson, ``Existence and uniqueness theorem for the safronov--dubovski
  coagulation equation,'' \emph{Zeitschrift f{\"u}r angewandte Mathematik und
  Physik}, vol.~65, no.~4, pp. 757--766, 2014.

\bibitem{oort1946gas}
J.~H. Oort and H.~C. Van~de Hulst, ``Gas and smoke in interstellar space,''
  \emph{Bulletin of the Astronomical Institutes of the Netherlands}, vol.~10,
  1946.

\bibitem{safronov1972evolution}
V.~S. Safronov, \emph{Evolution of the Protoplanetary Cloud and Formation of
  the Earth and the Planets}.\hskip 1em plus 0.5em minus 0.4em\relax Israel
  Program for Scientific Translations Jerusalem, 1972.

\bibitem{dubovskiui1994mathematical}
P.~B. Dubovski{\u\i}, \emph{Mathematical theory of coagulation}.\hskip 1em plus
  0.5em minus 0.4em\relax Lecture Notes, Pohang University of Science and
  Technology, 1994.

\bibitem{bagland2005convergence}
V.~Bagland, ``Convergence of a discrete oort--hulst--safronov equation,''
  \emph{Mathematical methods in the applied sciences}, vol.~28, no.~13, pp.
  1613--1632, 2005.

\bibitem{da2015mathematical}
F.~Da~Costa, ``Mathematical aspects of coagulation-fragmentation equations,''
  in \emph{Mathematics of energy and climate change}.\hskip 1em plus 0.5em
  minus 0.4em\relax Springer, 2015, pp. 83--162.

\bibitem{banasiak2019analytic}
J.~Banasiak, W.~Lamb, and P.~Lauren{\c{c}}ot, \emph{Analytic Methods for
  Coagulation-Fragmentation Models, Volume II}.\hskip 1em plus 0.5em minus
  0.4em\relax CRC Press, 2019.

\bibitem{hsieh2012basic}
P.-F. Hsieh and Y.~Sibuya, \emph{Basic theory of ordinary differential
  equations}.\hskip 1em plus 0.5em minus 0.4em\relax Springer Science \&
  Business Media, 2012.

\bibitem{ball1990discrete}
J.~M. Ball and J.~Carr, ``The discrete coagulation-fragmentation equations:
  existence, uniqueness, and density conservation,'' \emph{Journal of
  Statistical Physics}, vol.~61, no. 1-2, pp. 203--234, 1990.

\end{thebibliography}
\end{document}